\documentclass[12pt,english,nocouv,nofiligrane]{amsart}
\usepackage{array}
\usepackage{threeparttable}
\usepackage{rotating}
\usepackage[section]{placeins}
\usepackage{amssymb} 
\usepackage{mathrsfs}
\usepackage{enumerate}
\usepackage{color}
\usepackage{amsrefs}        
\usepackage[all]{xy}

\input xy
\xyoption{all}
\newtheorem{lemma}{Lemma}[section]
\newtheorem{lemm}[lemma]{Lemma}
\newtheorem{prop}[lemma]{Proposition}
\newtheorem{coro}[lemma]{Corollary}
\newtheorem*{theomain*}{Main Theorem}
\newtheorem*{theoA*}{Theorem A}
\newtheorem*{theoB*}{Theorem B}

\newtheorem{rema}[lemma]{Remark}

\newtheorem{defi}[lemma]{Definition}
\newtheorem{exam}[lemma]{Example}
\newcommand{\N}{\mathbb N}
\newcommand{\Z}{\mathbb Z}

\newcommand{\Q}{\mathbb Q}
\newcommand{\C}{\mathbb C} 
\newcommand{\K}{\mathbb K}

\begin{document} 
\title{On the Idempotent Conjecture 
for Sidki Doubles}
\author{Indira Chatterji}
 \address{LJAD, UMR 7351 CNRS, Universit\'e de Nice}  
\email{indira@unice.fr}
\author{Guido Mislin}
\address{Department of Mathematics ETHZ, Z\"urich}    
 \email{mislin@math.ethz.ch}
     \date{September 19th, 2022}
     \dedicatory{To Ruth Charney}
\begin{abstract} 
Let $G$ be a group and $X(G)$ its Sidki Double. The idempotent conjecture says that there should be no non-trivial idempotent in the complex group ring of a torsion-free group. We investigate this conjecture for the Sidki double of a torsion-free group, and obtain a partial result.
\end{abstract}
\maketitle
\section*{Introduction}

Let $G$ be a group and $\psi: G\to G^\psi$ an isomorphism
between (disjoint) groups.
Write $g^\psi$ for $\psi(g)$ and denote by $[x,y]$ for the commutator
$x^{-1}y^{-1}xy$ of two group elements $x$ and $y$.
The {\it Sidki Double $X(G)$ of $G$}
is the group defined by the following presentation:
$$ X(G)=\left<G, G^\psi\,|\, [g,g^\psi]\,, g\in G\right>\,.$$
From the definition we see that $G\mapsto X(G)$ is a functor
from groups to groups. Also, $X(G)$ maps onto $G\times G$ and is a
 factor group of the free product
$G*G$ :
$$G*G \longrightarrow X(G)\longrightarrow G\times G\,.$$
\noindent
It follows that we can view $G$ as well as $G^\psi$ as subgroups of $X(G)$.
We then write $g,g^\psi\in X(G)$ for $g\in G$, if no confusion is possible.
As elements of $X(G)$, $g$ and $g^\psi$ commute but are distinct, if $g\neq e$.
The doubled group $X(G)$ inherits many properties from the original group $G$. 
For instance, if $G$ is solvable then
$X(G)$ is solvable too (cf.\,Sidki \cite{sidki}, Corollary 4.8). More generally, it is straightforward to see (cf.\ Section \ref{examples}) that for $G$ amenable, then $X(G)$ is amenable as well. In this note, we investigate the stability of various idempotent conjectures when taking Sidki doubles.

\medskip

Recall that an idempotent in a unital ring is {called \sl{trivial}} if it is equal to $0$ or $1$. In case where the group $G$ has an element $g\in G$ of $n$-torsion, one checks that $p=\frac{1}{n}(1+g+\cdots+g^{n-1})$ is an idempotent and
Kaplansky conjectured that for a torsion-free group $G$ and $\K$ any field,
the only idempotent of $\K G$ are the trivial ones (cf.\,\cite{Passman}). Two other famous conjectures due to Kaplansky and others (concerning torsion-free groups) are
the \lq\lq Unit Conjecture\rq\rq, stating that all units of $\K G$
are of the form $\lambda g$ with $\lambda\in\K$ and $g\in G$,
and the \lq\lq Zero-Divisor Conjecture\rq\rq, stating that $\K G$ is a domain. Both of these conjectures imply the Idempotent Conjecture.  For
the Unit Conjecture over fields of finite characteristic, there are now counterexamples in characteristic 2 according to Gardam \cite{Gardam} and in general
 prime characteristic according to Murray \cite{Murray}.
 In the sequel, we will mainly deal with the case of $\K=\C$, the field of complex numbers.
 
We are interested in proving that $\C X(G)$ has no non-trivial idempotent in case where $\C G$ has none. Of course, a necessary condition for that to hold is that $X(G)$ is torsion free, and according to \cite{LO} this is in general not the case as for instance $X({\bf Z}^n)$ has 2-torsion as soon as $n\geq 3$. We shall see more examples in Proposition  \ref{TorsionExample}, but we don't know for instance whether $X({\bf F}_2)$ is torsion-free for ${\bf F}_2$ the free group on two generators. Our main theorem is the following.
\begin{theomain*}\label{A}
Let $G$ be a group and assume that its Sidki double $X(G)$ is torsion-free. 
If $\C (G\times G\times G)$ has no non-trivial idempotent, then
$\C X(G)$ does  not contain any idempotent other that $0$ and $1$.
\end{theomain*}
It is not true in general that if $\C G$ satisfies the idempotent conjecture, then so does its Sidki double $X(G)$, since the groups in Proposition \ref{TorsionExample} can be chosen to satisfy the idempotent conjecture.
There are
many classes of groups $G$ for which it is known that $\C(G\times G\times G)$
has no non-trivial idempotent, for instance if $G$ is a torsion-free a-T-menable
group. This follows from the fact that such groups satisfy the Bass Conjecture over ${\C}$,
cf. \cite{BCM}.

The proof of the Main Theorem relies on Kaplansky's Theorem, which states that for any group $H$ and idempotent $x=x^2=\sum x_h\cdot h \in \C H$, then 
$$x\in \{0,1\} \Longleftrightarrow x_e\in\{0,1\}\,. $$
We will refer to the coefficient $x_e$ of $x$ as the Kaplansky Trace of $x$
and denote it by $\kappa(x)$.
One can define $\kappa$ on arbitrary
elements of $\C H$ by putting  
$$\kappa(a)=a_e\,,\quad \text{for any} \quad a=\sum a_h\cdot h\in \C H\,.$$
The function $\kappa: \C H\to\C$ is a {\sl{trace}}, meaning that it
is $\C$-linear and satisfies
$$\kappa(xy)=\kappa(yx)\,,$$
for all $x,y\in\C H$.
We will also make use of another trace function
on $\C H$, the {\sl{augmentation trace}}  $\epsilon:\C H\to\C$
defined by
$$ \epsilon(\sum a_h\cdot h)=\sum a_h\,.$$
Note that $\epsilon (xy)=\epsilon(yx)$ and $\epsilon$ is $\C$-linear and moreover $\epsilon$ is multiplicative: $\epsilon(ab)=\epsilon(a)\epsilon(b)$, making it a ring homomorphism. Therefore, if $x=x^2\in\C H$, then $\epsilon(x)$ is an idempotent
in $\C$, thus $\epsilon(x)\in \{0,1\}$. One concludes that if an idempotent $x$
in $\C H$ satisfies $\kappa(x)=\epsilon(x)$ then $\kappa(x)\in\{0,1\}$
and therefore, by Kaplansky's Theorem, $x$ equals $0$ or $1$. That $\kappa(x)=\epsilon(x)$ when those traces are extended to matrices is the content of the {\bf{Weak Bass Trace Conjecture}}. Indeed, the definition of these traces can be extended to matrices as follows. Let
$A=(a_{ij})\in M_n(\C H)$ with $a_{ij}=\sum_{x\in H}a_{ij}(h)\cdot h$, and $a_{ij}(h)\in\C$ for any $h\in H$. We then set
$$\kappa(A)=\sum_{1\le i\le n}a_{ii}(e)\,,\quad \epsilon(A)=\sum_{h\in H}
(\sum_{1\le i\le n}a_{ii}(h))\,.$$
According to Zaleskii \cite{Zal}, for any idempotent matrix $A$, $\kappa(A)$ is a rational number 
$\ge 0\,,$ and $\epsilon(A)$ is an integer satisfying 
$0\leq \epsilon(A)\leq n$, because $\epsilon(A)$ is the
trace of an idempotent matrix in $M_n(\C)$.

Bass considered in \cite{Bass} the Hattori-Stallings Trace,
defined for an idempotent matrix (or a finitely generated projective $\C H$-module). It is a
class function $H\to \C $ and gives rise to trace functions on idempotent
matrices as follows. If $A=A^2\in M_n(\C H)$ as above, one gets a finitely generated
projective (left) $\C H$-module $P:=(\C H)^n\cdot A$ and for $x \in H$ the
Hattori-Stallings Trace of $A$ is given by
$$r_A(x):= \sum_{y\in [x], 1\le i\le n}a_{ii}(y).$$
Here $[x]$ stands for the conjugacy class of $x$.
The {\bf{Bass Trace Conjecture}} for $\C H$ is the statement that $r_A(x)=0$
for $x\in H$ of infinite order.

The {\bf{Weak Bass Trace Conjecture}} asserts that for a torsion-free
group $H$ and idempotent matrix $A\in M_n(\C H)$
one has $\kappa(A)=\epsilon(A)$, or equivalently, $\sum_{[x], x \neq e } r_A(x)=0$. This can
thus be viewed as a generalization of the Idempotent Conjecture for $\C H$, which is
the case of $(1\times 1)$-matrices.

\smallskip

To prove the Main Theorem we establish
that under the conditions stated, idempotent matrices $A\in M_n(\C X(G))$
satisfy $\kappa(A)=\epsilon(A)$. So in fact we more generally show that if $\C (G\times G\times G)$ satisfies weak Bass trace conjecture, then so does $\C H$ for any torsion-free subgroup $H$ of  $X(G)$, see Proposition \ref{P}. 

\bigskip

In Section 1  we will review some basic properties of Sidki Doubles.
In Section 2 we prove the Main Theorem and in Section 3 we construct 
examples of Sidki Doubles to illustrate some of their features.

\begin{rema}
In \cite{R} Rocco defined a group $\mathcal{V}(G)$, which has features similar
to $X(G)$, as follows (for group elements $x,y$ we write $x^y$ for the conjugate $y^{-1}xy$)
$$\mathcal{V}(G)=\left< G, G^\psi\,|\,[g, h^\psi]^{k^\epsilon}=[g^k,(h^k)^\psi],\,g, h, k\in G ,\epsilon\in\{1,\psi\}\right>.$$
From the definition, one can see that $\mathcal{V}(G)$ fits in between
the free product and the cartesian product of $G$ with itself:
$$ G\ast G\longrightarrow\!\!\!\!\to\mathcal{V}(G)
\longrightarrow\!\!\!\!\to G\times G\,,$$
both maps being surjective. According to Rocco \cite{R}*{Section 2} the group $\mathcal{V}(G)$ maps onto a quotient of $X(G)$, with central kernel $\Delta(G)<\mathcal{V}(G)$, the subgroup $\Delta(G)$
being generated by all $[g,g^\psi]$ with $g\in G$. More precisely, there is a stem-extension (those are discussed in Section \ref{Sstem})
$$\Delta(G)\to \mathcal{V}(G)\to X(G)/R(G)\,,$$
with $R(G)=[G,L(G),G^\psi]<X(G)$ the subgroup generated by triple commutators, where $L(G)=\langle g^{-1}g^\psi| g\in G\rangle<X(G)$.

 \medskip\noindent
 Our methods could also be applied to the study of idempotent elements in
the group ring $\mathbb{C}\, \mathcal{V}(G)$.
\end{rema}
We thank the referee for valuable suggestions and for pointing out the reference
\cite{R}. 
We also thank Martin Bridson for interesting conversations, this paper has been written inspired by his lecture.
\section{Hattori-Stallings trace on Sidki Doubles}
We will use the notation introduced by Kochloukova and Sidki
in \cite{KS}. For the Sidki Double $X(G)$, as defined in the Introduction,
there is a natural homomorphism
$$\rho: X(G)\longrightarrow G\times G\times G$$
defined on generators by $\rho(g)=(g,g,1)$ and $\rho(g^\psi)=(1,g,g)$ and whose kernel is denoted by $\ker(\rho)=W(G)$, which is a normal subgroup in $X(G)$, abelian according to Sidki in \cite{sidki}. In this section we prove that the Hattori-Stallings trace for an integral or complex group ring of a Sidki double $X(G)$ is always trivial on the elements of infinite order from the subgroup $W(G)<X(G)$.
 \begin{prop}\label{CCW}
 Let $G$ be a finitely generated group, $R=\Z$ or $\C$ and $A\in M_n(R X(G))$ an
 idempotent matrix. If $y\in W(G) < X(G)$ is any element of infinite order, then
 $r_A(y)=0$.
 \end{prop}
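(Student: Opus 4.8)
The plan is to exploit the structure of $W(G)=\ker(\rho)$ together with the general machinery connecting the Hattori–Stallings trace to $\ell^2$-Betti numbers or, more elementarily, to the Kaplansky trace on a quotient. The key structural input is that $W(G)$ is abelian (Sidki) and normal in $X(G)$, and that it sits inside the kernel of the map $\rho\colon X(G)\to G\times G\times G$. For an element $y\in W(G)$ of infinite order, $r_A(y)$ is a sum of Kaplansky-type coefficients over the conjugacy class $[y]$ in $X(G)$. The first step is to understand this conjugacy class: since $W(G)$ is normal and abelian, conjugation by $X(G)$ acts on $W(G)$ through $G\times G\times G = X(G)/W(G)$, so $[y]$ is contained in $W(G)$ and is an orbit of this action. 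I would reduce the statement to a question about this orbit and the trace of $A$ restricted over $W(G)$.

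Concretely, first I would recall the standard fact (a consequence of Linnell/Lück-type results, or more directly of the fact that for an abelian normal subgroup the Hattori–Stallings trace is controlled by the associated von Neumann trace) that for an idempotent matrix $A$ over $RX(G)$ and an element $y$ of infinite order lying in an abelian normal subgroup $N\lhd X(G)$, one has $r_A(y)=0$ provided the center (or a suitable large subgroup) acts on $N$ appropriately. Rather than invoke heavy analytic machinery, the cleaner route here is: pass to the coefficients of $A$, write $A=A^2$ out in terms of the group ring, and use that $y$ has infinite order inside the \emph{abelian} group $W(G)$ — hence $y$ generates an infinite cyclic subgroup, and one can apply the classical argument that the Hattori–Stallings trace vanishes on non-torsion elements of abelian groups (the case of abelian groups of the Bass conjecture, which is classical: $\mathbb{C}[A]$ for $A$ abelian is a commutative ring, idempotents are locally constant functions on $\widehat{A}$, and the trace reads off as an integral against Haar measure, forcing $r_A(y)=0$ for $y$ of infinite order).

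The bridge between the abelian-group case and our situation is the following: I would fix $y\in W(G)$ of infinite order, let $C=\langle y\rangle^{X(G)}$ be the normal closure — but since $W(G)$ is abelian and normal, it suffices to work with the subgroup $W(G)$ itself. One restricts the projective module $P=(RX(G))^n A$ to $RW(G)$: as a $W(G)$-module it is still finitely generated projective (since $RX(G)$ is free over $RW(G)$), and the $W(G)$-trace of its class, evaluated at $y$, equals $r_A(y)$ by a standard induction/restriction compatibility of Hattori–Stallings traces along the inclusion $W(G)\hookrightarrow X(G)$ (here one uses that the $X(G)$-conjugacy class of $y$ meets $W(G)$ in a union of $W(G)$-conjugacy classes, which are singletons since $W(G)$ is abelian — so one must sum the $W(G)$-trace over the finitely many, or possibly infinitely many, $W(G)$-conjugates, but each contributes $0$). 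Applying the abelian case of the Bass conjecture to $RW(G)$ then gives the vanishing.

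The main obstacle I anticipate is the compatibility/bookkeeping in the restriction step: when one restricts $P$ from $X(G)$ to $W(G)$, the Hattori–Stallings trace $r^{W(G)}_{P}$ evaluated at $y$ picks up contributions from the \emph{full} $W(G)$-coefficient function of a chosen idempotent representing $\mathrm{Res}\,P$, and one must check carefully that $\sum_{z\in [y]_{X(G)}\cap W(G)} r^{W(G)}_P(z) = r_A(y)$ — i.e., that no contributions are lost or double-counted, and that the relevant sum is finite or at least summable. A clean way around this is to avoid restriction altogether and instead argue directly: since $W(G)$ is abelian and normal with quotient $Q=G\times G\times G$, choose a transversal and note that $RX(G)$ is a crossed product $RW(G)\ast Q$; expand $A=A^2$ in this crossed-product decomposition, project onto the $RW(G)$-component (the "$e$-component" for the $Q$-grading), obtaining an idempotent over the commutative ring $RW(G)$ up to the twisting, and then read off $r_A(y)$ as a coefficient of this component. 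The twisting by the $Q$-action is harmless because $Q$ acts on $W(G)$ preserving the "infinite order" locus and, crucially, the trace of an idempotent over a commutative group ring is a $Q$-invariant measure-theoretic quantity vanishing off the identity component — this is where finite generation of $G$ (hence of the relevant data) is used to keep everything within reach of the classical abelian argument.
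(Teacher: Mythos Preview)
Your proposal has a genuine gap at the restriction step. You write that $P=(RX(G))^nA$ ``restricts to $RW(G)$: as a $W(G)$-module it is still finitely generated projective (since $RX(G)$ is free over $RW(G)$)''. Freeness of $RX(G)$ over $RW(G)$ gives projectivity of the restriction, but \emph{not} finite generation: the index $[X(G):W(G)]$ is infinite (the quotient contains $G\times G\times G$), so $\operatorname{Res}^{X(G)}_{W(G)}P$ is an infinitely generated projective $RW(G)$-module, and the abelian case of the Bass conjecture does not apply to it. Consequently there is no well-defined Hattori--Stallings rank over $RW(G)$ to compare with $r_A(y)$, and the ``induction/restriction compatibility'' you invoke is not available. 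Your alternative crossed-product argument has the same problem in disguise: the $e$-component (for the $Q$-grading) of an idempotent in $RW(G)\ast Q$ is not itself an idempotent in $RW(G)$, so you cannot simply ``read off $r_A(y)$ as a coefficient'' and feed it into the commutative case.

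The paper proceeds entirely differently and avoids any restriction to $W(G)$. It first proves a structural lemma: the conjugation action of $X(G)$ on $W(G)$ factors through a \emph{finitely generated abelian} group $A(G)$ (this uses that $W(G)=L(G)\cap D(G)$ is central in $L(G)D(G)$, and that $\rho(L(G)D(G))$ contains the commutator subgroup of $G\times G\times G$; finite generation of $G$ enters here). It then invokes Bass's Theorem~8.1(c) from \cite{Bass}: to get $r_A(y)=0$ it suffices that $y$ be conjugate in $X(G)$ to $y^{p^n}$ for only finitely many primes $p$. But any such conjugation is implemented by an element of $A(G)$, and since $y$ has infinite order the corresponding automorphisms for distinct primes generate a free abelian group of rank $|\Pi|$ inside $A(G)$; as $A(G)$ is finitely generated abelian, $\Pi$ is finite. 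No restriction of modules and no analytic input is needed.
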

To prove this proposition, we first study the conjugation action of $X(G)$ on $W(G)$.
\begin{lemm}\label{L1}
The conjugation action of $X(G)$ on $W(G)$ factors through an
abelian group $A(G)$, which is finitely generated in case $G$ is finitely generated.
\end{lemm}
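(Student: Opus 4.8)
\emph{Plan of proof.} The conjugation action of $X(G)$ on $W(G)$ is a homomorphism $c\colon X(G)\to\operatorname{Aut}(W(G))$; since $W(G)$ is normal in $X(G)$, its centralizer $C_{X(G)}(W(G))$ is normal as well, and I take $A(G):=c(X(G))\cong X(G)/C_{X(G)}(W(G))$, a quotient group of $X(G)$. By Sidki's theorem $W(G)$ is abelian, so it centralizes itself: $W(G)\subseteq C_{X(G)}(W(G))$, whence $A(G)$ is in fact a quotient of $X(G)/W(G)\cong\rho\bigl(X(G)\bigr)\le G\times G\times G$. Consequently, if $G$ is finitely generated then $X(G)$ is (being generated by the generators of $G$ and of $G^{\psi}$), hence so is its quotient $A(G)$; this disposes of the finite generation statement, and the content of the lemma reduces to showing that $A(G)$ is \emph{abelian}, equivalently that $[X(G),X(G)]\subseteq C_{X(G)}(W(G))$, i.e.\ that $\bigl[\,[X(G),X(G)],\,W(G)\,\bigr]=\{1\}$. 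Once this is known, $c$ factors through $X(G)^{\mathrm{ab}}=G^{\mathrm{ab}}\times G^{\mathrm{ab}}$, and one may take $A(G)$ to be the image of $G^{\mathrm{ab}}\times G^{\mathrm{ab}}$ in $\operatorname{Aut}(W(G))$.

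To prove that $[X(G),X(G)]$ centralizes $W(G)$, I would use that this commutator subgroup is generated, as a subgroup, by the first copy $[G,G]$, by $[G^{\psi},G^{\psi}]$, and by $D(G):=\ker\bigl(X(G)\to G\times G\bigr)$: indeed $D(G)$ is the normal closure of $[G,G^{\psi}]$, it is contained in $[X(G),X(G)]$, and $[X(G),X(G)]/D(G)\cong[G,G]\times[G,G]$. Since $C_{X(G)}(W(G))$ is a subgroup, it is enough to check that each of these three families centralizes $W(G)$. Furthermore the presentation of $X(G)$ is invariant under the involution $\sigma$ interchanging $G$ and $G^{\psi}$, and $\sigma$ preserves $W(G)=\ker\rho$ since $\rho\circ\sigma$ equals $\rho$ followed by reversal of the three coordinates; hence $\sigma$ preserves $C_{X(G)}(W(G))$, and the assertion for $[G^{\psi},G^{\psi}]$ follows from that for $[G,G]$. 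There remain two points: (a) $D(G)$ centralizes $W(G)$, and (b) the conjugation action of the first copy $G$ on $W(G)$ factors through $G^{\mathrm{ab}}$, i.e.\ $[G,G]$ centralizes $W(G)$. For these I would invoke Sidki's analysis of $W(G)$ inside $D(G)$ from \cite{sidki} (and \cite{KS}): $\rho$ induces an isomorphism $D(G)/W(G)\cong[G,G]$ carrying $[g,h^{\psi}]$ to $[g,h]$, and this yields an explicit normal generating set of $W(G)$ in $X(G)$ made of commutator-type elements, for example the symmetrised elements $[g,h^{\psi}][h,g^{\psi}]$ together with elements recording the $\rho$-trivial defect of $[g,h^{\psi}]$ under conjugation. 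Conjugating such a generator by $g_{0}\in G$, or by a normal generator $[g_{1},h_{1}^{\psi}]$ of $D(G)$, and expanding using the commutator identities and the relations $[g,g^{\psi}]=1$, one sees that the conjugate differs from the original by an element of $W(G)$, which, being abelian, acts trivially on itself; this gives (a) and (b).

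The step I expect to be the main obstacle is precisely this last one: pushing the commutator calculus in $X(G)$ through carefully enough to verify that conjugation by $[G,G]$, by $[G^{\psi},G^{\psi}]$, and by $D(G)$ fixes each normal generator of $W(G)$. The remaining ingredients — the reduction to abelianness, its reduction to the three families $[G,G]$, $[G^{\psi},G^{\psi}]$, $D(G)$ (using the $\sigma$-symmetry and $D(G)\subseteq[X(G),X(G)]$), and the finite generation — are formal, given Sidki's facts that $W(G)$ is abelian and that $D(G)/W(G)\cong[G,G]$.
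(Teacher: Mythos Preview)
Your overall strategy---showing that $[X(G),X(G)]$ centralizes $W(G)$, so that the action factors through $X(G)^{\mathrm{ab}}\cong G^{\mathrm{ab}}\times G^{\mathrm{ab}}$---is sound, and the reduction to the three generating families $[G,G]$, $[G^\psi,G^\psi]$, $D(G)$ (using the $\sigma$-symmetry) is correct. The finite-generation argument is fine.

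However, the step you yourself flag as the obstacle is a genuine gap, and the sketch you give for it does not work as stated. You write that after conjugating a normal generator $w\in W(G)$ by $g_0\in G$ or by $[g_1,h_1^\psi]\in D(G)$, ``the conjugate differs from the original by an element of $W(G)$, which, being abelian, acts trivially on itself''. But $W(G)$ is normal in $X(G)$, so $w^{-1}w^{g_0}=[w,g_0]\in W(G)$ holds for \emph{every} $g_0\in X(G)$; this observation is vacuous and does not show $[w,g_0]=1$. Establishing (a) and (b) by direct commutator calculus on generators of $W(G)$ would require substantially more than what you outline.

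The paper avoids this entirely by bringing in the subgroup $L(G)=\ker(\mu\rho)$, where $\mu\colon G^3\to G$ is the middle projection; $L(G)$ is generated by the elements $g^{-1}g^\psi$. The single input from Sidki that is needed is \cite{sidki}*{Lemma~4.1.6(ii)}: $L(G)$ centralizes $D(G)$. Since $W(G)=L(G)\cap D(G)$, it follows immediately that $W(G)$ is central in $L(G)D(G)$. A short explicit computation in $G\times G\times G$ then shows $\rho(L(G)D(G))\supseteq G'\times G'\times G'$, whence $A(G):=X(G)/L(G)D(G)\cong\rho(X(G))/\rho(L(G)D(G))$ is a quotient of $(G^{\mathrm{ab}})^3$, hence abelian. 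In particular $[G,G]$ and $D(G)$ both lie in $L(G)D(G)$, so your claims (a) and (b) are true---but the efficient route is through $L(G)$ and Sidki's centralizing lemma, not through a head-on attack on generators of $W(G)$.
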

\begin{proof} Let $\mu: G\times G\times G \to G, (x,y,z)\mapsto y$
denote the middle projection. Then we have
a short exact sequence
$$\operatorname{ker}{(\mu\rho)} =:L(G)\longrightarrow X(G)\stackrel{\mu\rho}\longrightarrow G\,.
$$
The subgroup $L(G)$ is generated by elements of the form $g^{-1}g^\psi\in X(G)$, where $g\in G$.
Similarly, with 
$$\omega: G\times  G\times G\to G\times G, (x,y,z)\mapsto (x,z)$$
the \lq\lq left-right\rq\rq projection and writing $D(G)=\ker(\omega\rho)$ for the
kernel of $\omega\rho$, one can check that
 $$W(G)=D(G)\cap L(G)\lhd X(G)\,.$$
A crucial property we will use is the fact that $L(G)$ centralizes $D(G)$
(cf. Lemma 4.1.6 (ii) of \cite{sidki}). Therefore, $W(G)$ is central in $L(G)D(G)$:
$$W(G)=L(G)\cap D(G)\xrightarrow{\text{central}}L(G)D(G)
\xrightarrow{\text{normal}} X(G)\,.$$
We first claim that the commutator subgroup $G'$ of $G$ satisfies
$$G'\times G'\times G'<\rho(L(G)D(G))\,.$$
This can be seen from the following equations in $G\times G\times G$:
\begin{eqnarray*}([u,v],e,e)& =&(u^{-1},e,u)(v^{-1},e,v))(uv,e,(uv)^{-1})\\
&=&\rho(u^{-1}u^\psi\cdot v^{-1}v^\psi\cdot uv((uv)^{-1})^\psi)\in \rho(L(G))\,,\end{eqnarray*}
using the fact that $L(G)$ is generated by elements of the form
$g^{-1}g^\psi$, $g\in G$. Similarly $(e,e,[u,v])\in \rho(L(G))$. Because $D(G)$  is generated
by elements of the form $[x,y^\psi]$ with $x,y\in G$, 
 and
\begin{eqnarray*}\rho([x,y^\psi])&=&\rho(x^{-1}(y^\psi)^{-1}xy^\psi)\\
&=&(x^{-1},x^{-1},e)(e,y^{-1},y^{-1})(x,x,e)(e,y,y)=(e,[x,y],e)\,,\end{eqnarray*}
we see that  $(e,[x,y],e)\in \rho(D(G))$. One concludes that $\rho(L(G)D(G))$
contains the commutator subgroup of $G\times G\times G$, finishing our first claim.
Now, since $W(G)$ is central in $L(G)D(G)$, the conjugation action of $X(G)$ on $W(G)$ factors through
$$X(G)/L(G)D(G)\cong \rho(X(G))/\rho(L(G)D(G))=:A(G)\,,$$
which is an abelian group, as  
 $G'\times G'\times G' <\rho(L(G)D(G))$.
If $G$ is finitely
generated, then so is $X(G)$ and its factor group $A(G)$.
\end{proof}
We can turn to the proof of Proposition \ref{CCW}, which is now a direct consequence of Bass' theorem {\cite{Bass}*{Theorem 8.1, (c)}}.
  \begin{proof}[Proof of Proposition \ref{CCW}] Take $y\in W(G)$ of
  infinite order.  We want to show that $r_A(y)=0$. Because of Bass' Theorem
  {\cite{Bass}*{Theorem 8.1, (c)}} applied to the
finitely generated projective $R X(G)$-module $P=(R X(G))^n\cdot A$,
 it suffices to show that
  there can be only finitely many primes $p$ such that $y$ is conjugate in
  $X(G)$  to $y^{p^n}$ for some positive integer $n$. 
  Assume that $y$ is conjugate to $y^{p^n}$ and denote the set of such primes by $\Pi$. According to Lemma \ref{L1}, the conjugation  action of $X(G)$ on $W(G)$ factors through an abelian group $A(G)$. Therefore, we can choose for any such prime an automorphism $\phi_p\in A(G)$ for which $\phi_p(y)=y^{p^n}$ and since $y$ has infinite order, the set $\{\phi_p\}$ generates a free abelian subgroup 
in $A(G)$ of rank equal to the cardinality of $\Pi$ in $A(G)$. But $A(G)$ is a finitely generated abelian group according to the second part of Lemma \ref{L1}. Hence $y$ can be conjugate to $y^{p^n}$ for finitely many primes $p$ only, which concludes the proof.
  \end{proof}
\section{Reduction to finitely generated groups}
We first prove some lemmas which imply that in the proof of the Main Theorem
for $X(G)$, it suffices to consider the case of a finitely generated group $G$. First, notice that the map $X(H)\to X(G)$ induced by the inclusion of a subgroup $H<G$, is in general not injective (cf.\,Example \ref{subgroup}). However, the following lemma allows us to restrict our study to finitely generated subgroups.
\begin{lemm}\label{lift} Let $G$ be a group and $A\in M_n(R X(G))$ an idempotent
matrix, where $R$ is a commutative unital ring. Then 
there exist a finitely generated subgroup $H<G$ such that $A$
can be lifted to an idempotent $B\in M_n(R X(H))$.
\end{lemm}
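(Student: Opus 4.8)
The plan is to use the fact that an idempotent matrix $A \in M_n(R X(G))$ involves only finitely many group elements, and that each such element — being a word in the generators $G \cup G^\psi$ — uses only finitely many elements of $G$. So first I would collect all the group elements appearing with nonzero coefficient in any entry $a_{ij}$ of $A$; call this finite set $S \subset X(G)$. Each $s \in S$ is represented by some word $w_s$ in finitely many generators from $G$ and finitely many generators of the form $g^\psi$; let $H_0 < G$ be the (finitely generated) subgroup generated by all the base elements of $G$ that occur, either directly or as $g^\psi$, across all the $w_s$.

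The key subtlety is that the natural map $X(H_0) \to X(G)$ need not be injective (as the excerpt flags via Example \ref{subgroup}), so lifting $A$ to $M_n(R X(H_0))$ is not automatic: the relators $[h, h^\psi]$ for $h \in H_0$ hold in $X(H_0)$, but there could be \emph{extra} relations among the chosen words that hold in $X(G)$ but fail in $X(H_0)$, and then the obvious lift of $A$ would fail to be idempotent. The remedy is the standard one: the relation $A = A^2$ in $M_n(R X(G))$ expands into finitely many equations among finitely many group elements of $X(G)$; each such equation is a consequence of the defining relations of $X(G)$, hence follows from finitely many of them, i.e. from finitely many relators $[g, g^\psi]$. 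Enlarge $H_0$ to a finitely generated subgroup $H < G$ that also contains all these finitely many elements $g$. Then in $X(H)$ all the relevant relations $A = A^2$ already hold among the corresponding words, so the matrix $B \in M_n(R X(H))$ built from the same words with the same coefficients is genuinely idempotent, and it maps to $A$ under $M_n(R X(H)) \to M_n(R X(G))$.

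I would carry this out in three steps: (1) extract the finite support $S$ of $A$ and choose representative words, defining $H_0$; (2) expand $A^2 = A$ into finitely many identities in $X(G)$ and, using the presentation $X(G) = \langle G, G^\psi \mid [g,g^\psi]\rangle$ together with the fact that a consequence of a set of relations uses only finitely many of them, collect the finitely many base elements $g$ whose relators are needed, enlarging $H_0$ to a finitely generated $H$; (3) define $B$ over $R X(H)$ using the same words and coefficients, and verify that $B = B^2$ in $M_n(R X(H))$ because every defining equation now holds there, while functoriality of $X(-)$ gives the map $X(H) \to X(G)$ sending $B \mapsto A$.

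The main obstacle — and the only place any real care is needed — is step (2): one must be precise that \lq\lq$A^2 = A$ holds in $M_n(R X(G))$\rq\rq\ really does reduce to finitely many word-equations each provable from finitely many relators, so that a single finitely generated $H$ suffices simultaneously for all of them. This is essentially the observation that group rings and group presentations are \lq\lq finitary\rq\rq: both the data (the matrix $A$) and the verification (the equation $A^2=A$) involve only finitely many elements and finitely many relations. Once that is pinned down, the lift $B$ and the compatibility with $X(H)\to X(G)$ are immediate, and it does not matter that this latter map may fail to be injective.
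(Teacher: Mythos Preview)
Your plan is correct and is essentially the same argument the paper gives, just unwound by hand. The paper packages the finitariness once and for all in Lemma~\ref{fgcase}, proving the natural isomorphism $\varinjlim_\alpha X(G_\alpha)\cong X(G)$ over the directed system of finitely generated subgroups $G_\alpha<G$, and hence $\varinjlim_\alpha M_n(R X(G_\alpha))\cong M_n(R X(G))$; then it lifts $A$ to some $B_\beta$ and, if $B_\beta^2-B_\beta\ne 0$, passes to a larger $G_\gamma$ where this difference vanishes. Your steps (1)--(3) are precisely the content of that colimit lemma and its application, spelled out with explicit words and relators rather than colimit language. One small caution for step~(2): when you write that each word-equation ``follows from finitely many relators $[g,g^\psi]$'', remember that membership in the normal closure means a product of \emph{conjugates} of relators, so you must also throw into $H$ the finitely many elements of $G$ occurring in the conjugating words, not only the $g$'s from the relators themselves; once you do, your $B$ is indeed idempotent in $M_n(RX(H))$.
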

In order to prove this lemma, we first need to see that Sidki doubles behave well under direct limits.
\begin{lemm}\label{fgcase}
Let $G$ be a group and denote by $\{G_\alpha\}_{\alpha\in I}$ the family
of its finitely generated subgroups. Let $R$ be a commutative unital ring.
Then there is a natural isomorphism
$$\varinjlim_{\alpha\in I} (X(G_\alpha)) \stackrel{\cong}\to X(G),$$
which induces an isomorphism $\varinjlim_{\alpha\in I} (M_n(R X(G_\alpha))) \stackrel{\cong}\to M_n(R X(G))$.
\end{lemm}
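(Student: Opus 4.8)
The plan is to realise $X(G)$ as the filtered colimit of the $X(G_\alpha)$ directly, and then handle group rings and matrices formally. First observe that $I$ is directed, since the subgroup generated by two finitely generated subgroups is again finitely generated, and that $G=\bigcup_\alpha G_\alpha=\varinjlim_\alpha G_\alpha$ with injective structure maps. Applying the functor $X$ produces a directed system $\{X(G_\alpha)\}$ --- whose structure maps need \emph{not} be injective, in view of Example \ref{subgroup} --- together with a compatible cone to $X(G)$, hence a canonical homomorphism $\Phi\colon\varinjlim_\alpha X(G_\alpha)\to X(G)$. It then suffices to check that $\Phi$ is bijective.

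Surjectivity is immediate: $X(G)$ is generated by $G\cup G^\psi$, and each such generator already lies in some $G_\alpha$, hence in the image of $X(G_\alpha)\to X(G)$ and therefore of $\Phi$.

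Injectivity is the heart of the matter, and the step I expect to be the main obstacle. Here I would use the defining presentation $X(G)=(G\ast G^\psi)/N(G)$, with $N(G)\lhd G\ast G^\psi$ the normal closure of $\{[g,g^\psi]\mid g\in G\}$, and likewise for each $G_\alpha$. An element of $\ker\Phi$ is represented by some $w\in X(G_\alpha)$; pick a lift $\tilde w\in G_\alpha\ast G_\alpha^\psi$. The embedding of free products of subgroups into the ambient free product (a reduced word over a subgroup stays reduced over the larger group) lets me regard $\tilde w$ inside $G\ast G^\psi$, and the hypothesis $\Phi([w])=e$ says exactly that $\tilde w\in N(G)$, i.e.\ $\tilde w=\prod_{i=1}^{m}c_i\,[h_i,h_i^\psi]^{\varepsilon_i}\,c_i^{-1}$ in $G\ast G^\psi$ for some $c_i\in G\ast G^\psi$, $h_i\in G$, $\varepsilon_i\in\{\pm1\}$. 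Only finitely many elements of $G$ appear among the $h_i$ and the syllables of $\tilde w$ and of the $c_i$; letting $G_\beta$ be the finitely generated subgroup of $G$ generated by $G_\alpha$ together with all of them (so $\beta\ge\alpha$), the whole identity lives in $G_\beta\ast G_\beta^\psi$, where --- again using that $G_\beta\ast G_\beta^\psi\hookrightarrow G\ast G^\psi$ --- it already holds. Hence $\tilde w\in N(G_\beta)$, so $w$ maps to $e$ in $X(G_\beta)$, which means $[w]=0$ in $\varinjlim_\alpha X(G_\alpha)$. Thus $\ker\Phi$ is trivial, and $\Phi$ is an isomorphism.

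It remains to pass to matrix rings, which is formal. The group-ring functor $R[-]$ is a left adjoint (to the units functor on $R$-algebras), so it preserves colimits, giving $RX(G)=R[\varinjlim_\alpha X(G_\alpha)]\cong\varinjlim_\alpha RX(G_\alpha)$; and $M_n(-)$ commutes with filtered colimits of rings, since forming an $n\times n$ matrix over a colimit and comparing two such matrices each involve only finitely many entries, all defined at a common stage by filteredness. Composing, $M_n(RX(G))\cong\varinjlim_\alpha M_n(RX(G_\alpha))$, as asserted. The only genuine external input is in the injectivity step: the embedding of free products of subgroups, together with the observation that a single witness of triviality in $X(G)$ is finite data and hence already lives over a finitely generated subgroup.
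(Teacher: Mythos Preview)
Your proof is correct and follows essentially the same route as the paper's: both set up the canonical map by functoriality, prove surjectivity from the generating set, and prove injectivity by lifting to the free product $G*G^\psi$, writing the kernel element as a finite product of conjugates of commutators $[g,g^\psi]$, and passing to a large enough $G_\beta$ containing all the data. Your write-up is in fact slightly more explicit than the paper's (you spell out that $I$ is directed, that the structure maps need not be injective, and why $R[-]$ and $M_n(-)$ preserve filtered colimits), but the argument is the same.
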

\begin{proof}
 For $G_\alpha <G_\beta$ we write $\iota_{\alpha\beta}$
 for the inclusion map $G_\alpha\to G_\beta$. By functoriality, there
 are natural (not
 necessarily injective) maps
 $$X_{\alpha\beta}:=X(\iota_{\alpha\beta}): X(G_{\alpha})\to X(G_\beta)\,,$$
 which on generators $x_\alpha,x_\alpha^\psi\in X(G_\alpha)$, $x_\alpha\in G_\alpha$ and $x_\alpha^\psi\in G_\alpha^\psi$, are given by $X_{\alpha\beta}(x_\alpha)= \iota_{\alpha\beta}(x_\alpha)\in X(G_\beta)$
 and $X_{\alpha\beta}(x^\psi_\alpha)=\iota_{\alpha\beta}(x_\alpha^\psi)$. The resulting natural map
$$X_* : \varinjlim _{\alpha\in I} X(G_{\alpha}) \longrightarrow X(G)$$
is  surjective, because an element $z$ in $X(G)$ involves only
finitely many generators $x, x^\psi\in X(G)$, and these can be viewed as elements in
some $X(G_\alpha)$, giving rise to an element $z_\alpha\in X(G_\alpha)$ which maps to $z\in X(G)$. 
To prove the injectivity, we take $z\in \varinjlim X(G_\alpha)$ which maps to $e$ in $X(G)$.
Write $K_\alpha$ for the kernel of $G_\alpha * G_\alpha \to X(G_\alpha)$. The element $z$ lifts to an element $z_\alpha\in G_\alpha * G_\alpha$. Writing $K$ for the kernel of the natural map $G * G\to X(G)$, we see that $z_\alpha$ maps to $K$ under the inclusion $G_\alpha *G_\alpha \to G * G$. Because $K$ is generated by conjugates of elements of the form $[g,g^\psi]$, we can find $\beta>\alpha$ such that $z_\alpha$ maps already to the kernel $K_\beta$ of
the natural map $G_\beta * G_\beta \to  X(G_\beta)$. We conclude that $z_\alpha$ maps to $e\in X(G_\beta)$. Thus its image  $z\in \varinjlim X(G_\alpha)$ is equal to $e$.

Because taking group rings and matrix rings commutes with direct limits over directed indexing
sets, we also have an isomorphism
$$M_n(R X_*): \varinjlim_{\alpha\in I} M_n(R X(G_\alpha))\longrightarrow 
 M_n(R X(G))\,.$$
\end{proof}
 \begin{rema}\label{lim} Just as the map $X(H)\to X(G)$ induced by a subgroup $H<G$ is in general not injective, the map $W(H)\to W(G)$ is also in general not injective (also Example \ref{subgroup}). However for $\{G_\alpha\}_{\alpha\in I}$
 the family of finitely generated subgroups of $G$ the natural map 
 $$\varinjlim _{\alpha\in I}W(G_\alpha)\to W(G)$$ 
 is an isomorphism. Indeed, the functor $\underset{\alpha\in I}{\varinjlim}$ is left exact and the exact sequences
 $$\{e\}\to W(G_\alpha)\to X(G_\alpha)\stackrel{\rho_\alpha}\rightarrow 
 G_\alpha\times G_\alpha \times G_\alpha\,$$
together with Lemma \ref{fgcase} allows us to conclude.
 \end{rema}
 \begin{proof}[Proof of Lemma \ref{lift}] Let $A^2=A=\{a_{ij}\}\in M_n(R X(G))$ with $a_{ij}=
 \sum_{x\in X(G)} a_{ij}(x)\cdot x$. Let
  $\{G_\alpha\}_{\alpha\in I}$
be the family of finitely generated subgroups of $G$. Because (cf. Lemma \ref{fgcase})
$$\varinjlim_{\alpha\in I} M_n(R  X(G_\alpha))\stackrel{\cong}{\rightarrow} 
M_n(R X(G))\,,$$
we can find an index $\beta\in I$ and a matrix $B_\beta \in M_n(R  X(G_\beta))$, 
such that $B_\beta$ maps to $A$. If $B_\beta$ is an idempotent, then $B=B_\beta$ proves the lemma. Otherwise $B_\beta$ is not an idempotent, so define 
$D=B_\beta^2- B_\beta$. Because $D$ maps to $0\in M_n(R  X(G))$
we can find
an index $\gamma\in I$, with $\gamma\geq\beta$, and such that the natural map
$$\theta_{\beta\gamma}: M_n(R X(G_\beta)) \rightarrow M_n(R X(G_\gamma))$$ 
yields $\theta_{\beta\gamma}(D)=0$. It follows that $B:=
\theta_{\beta\gamma}(B_\beta)$ is  an idempotent in $M_n(R X(G_\gamma))$
which maps to $A$.
 \end{proof}
 \section{Proof of the Main Theorem}
 We have now all the ingredients to prove our main result.
 Note that if $X(G)$ is torsion-free then so is $G$, since $G$ is a retract of $X(G)$.
 Because for a torsion-free group, the idempotent conjecture is equivalent to the weak Bass Trace Conjecture for the case of $(1\times 1)$-matrices, the proof of our
 Main Theorem is a special case of the proof of the following result
 on idempotent $(n\times n)$-matrices.
 \begin{prop}\label{P}
 Let $G$ be a group and $X(G)$ its Sidki double. If $X(G)$ is torsion-free
 and $\C(G\times G\times G)$ satisfies the weak Bass Trace Conjecure,
 then $\C X(G)$ satisfies the weak Bass Trace Conjecture too.
 \end{prop}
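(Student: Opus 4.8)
The plan is to reduce, via the lemmas already in hand, to the case of a finitely generated group $G$, and then to use Proposition \ref{CCW} together with the homomorphism $\rho\colon X(G)\to G\times G\times G$ to push the Hattori–Stallings trace computation from $X(G)$ down to $G\times G\times G$, where the weak Bass Trace Conjecture is assumed. First I would take an idempotent matrix $A\in M_n(\C X(G))$; by Lemma \ref{lift} there is a finitely generated subgroup $H<G$ and an idempotent $B\in M_n(\C X(H))$ lifting $A$ (in the sense of mapping to $A$ under $M_n(\C X(H))\to M_n(\C X(G))$). Since the Hattori–Stallings trace and the traces $\kappa,\epsilon$ are compatible with the induced ring map, and $\kappa(A)=\epsilon(A)$ follows from $\kappa(B)=\epsilon(B)$ (both are unchanged: $\kappa$ reads off coefficients at $e$, $\epsilon$ sums all coefficients, and $X(H)\to X(G)$ sends $e$ to $e$), it suffices to prove the statement for the finitely generated group $H$. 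So from now on assume $G$ is finitely generated. Here one must also check that $X(H)$ is torsion-free — this is automatic since $X(H)<X(G)$ under the relevant lift? — actually the map $X(H)\to X(G)$ need not be injective, so instead I would note that it suffices to prove $\kappa(B)=\epsilon(B)$ for the idempotent $B$ over $X(H)$ where $X(H)$ may have torsion, by reinterpreting $B$ as lying over the (torsion-free) image subgroup; I expect the cleaner route is to work throughout with the subgroup of $X(G)$ generated by the finitely many group elements occurring in $A$, which is a finitely generated torsion-free subgroup $L<X(G)$, and to apply the general principle (stated as the last sentence before Proposition \ref{P} in the introduction, ``so in fact we more generally show...for any torsion-free subgroup $H$ of $X(G)$'') — but to prove that I still reduce to $G$ finitely generated via Lemma \ref{fgcase}.

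Granting $G$ finitely generated, the core argument is a trace decomposition over conjugacy classes. Write $\kappa(A)-\epsilon(A)=-\sum_{[x],\,x\neq e} r_A(x)$, the sum over conjugacy classes of non-identity elements of $X(G)$; since only finitely many group elements appear in $A$, only finitely many conjugacy classes contribute. Because $X(G)$ is torsion-free, every such $x\neq e$ has infinite order. I split these classes into two types: those $x$ lying in $W(G)=\ker\rho$, and those with $\rho(x)\neq e$. For $x\in W(G)$ of infinite order, Proposition \ref{CCW} gives $r_A(x)=0$ directly. For $x$ with $\rho(x)\neq e$, the idea is to use the ring homomorphism $\C X(G)\to \C(G\times G\times G)$ induced by $\rho$: it sends the idempotent $A$ to an idempotent $\rho_*(A)\in M_n(\C(G\times G\times G))$, and the Hattori–Stallings traces are related by $r_{\rho_*(A)}(\rho(x)) = \sum_{[x']\colon \rho(x')=[\rho(x)]} r_A(x')$, i.e.\ summing $r_A$ over those $X(G)$-conjugacy classes mapping into the fixed $(G\times G\times G)$-conjugacy class of $\rho(x)$. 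Since $G\times G\times G$ is torsion-free (as $X(G)$, hence $G$, is) and satisfies the weak Bass Trace Conjecture, $\rho(x)\neq e$ of infinite order forces the full sum $\sum_{[x']\mapsto[\rho(x)]} r_A(x')$ to vanish.

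Combining the two cases: $\sum_{[x],\,x\neq e} r_A(x) = \sum_{x\in W(G),\,[x]\neq[e]} r_A(x) + \sum_{\rho(x)\neq e} r_A(x)$, where in the second sum I group the $X(G)$-classes by their image in $G\times G\times G$; the first sum is zero by Proposition \ref{CCW}, and each group in the second sum is zero by the weak Bass Trace Conjecture for $\C(G\times G\times G)$ applied to $\rho_*(A)$. Hence $\kappa(A)=\epsilon(A)$, which is the weak Bass Trace Conjecture for $\C X(G)$. Finally I would unwind the reduction: for general $G$, the idempotent $A\in M_n(\C X(G))$ descends to an idempotent over $\C X(H)$ for finitely generated $H<G$, and one checks that if $X(G)$ is torsion-free then the relevant finitely generated subgroup on which we actually work is torsion-free, so the finitely generated case applies and $\kappa(A)=\epsilon(A)$ transports back.

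The step I expect to be the main obstacle is establishing the precise relationship between the Hattori–Stallings traces $r_A$ on $\C X(G)$ and $r_{\rho_*A}$ on $\C(G\times G\times G)$ under the non-injective map $\rho$ — in particular, verifying that $r_{\rho_*A}(w) = \sum_{\rho([x])=[w]} r_A(x)$ with the sum finite and correctly accounting for multiplicities, and handling the subtlety that distinct $X(G)$-conjugacy classes can fuse under $\rho$ or that an infinite-order element of $X(G)$ could in principle map to $e$ (exactly the case absorbed by $W(G)$ and Proposition \ref{CCW}). A secondary technical point is the torsion-freeness bookkeeping in the reduction to finitely generated $G$, since $X(H)\to X(G)$ is not injective; the safe formulation is to phrase everything for a finitely generated torsion-free subgroup of $X(G)$ and invoke Lemma \ref{fgcase} and Lemma \ref{lift} to get there.
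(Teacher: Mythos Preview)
Your proposal is correct and follows essentially the same approach as the paper: split $\sum_{[x]\neq[e]} r_A(x)$ according to whether $\rho(x)=e$ or not, handle the $\rho(x)\neq e$ part by pushing $A$ forward along $\rho$ and invoking the weak Bass Trace Conjecture for $G\times G\times G$, and handle the $W(G)$ part via Proposition~\ref{CCW}. The paper resolves your torsion-freeness worry cleanly by \emph{not} reducing to finitely generated $G$ at the outset: it treats the $\rho(x)\neq e$ sum directly over $X(G)$, and only lifts $A$ to an idempotent $D\in M_n(\C X(H))$ with $H$ finitely generated for the $W(G)$ sum, observing that any finite-order $v\in X(H)$ maps to $e$ in the torsion-free $X(G)$ and so contributes nothing, while each infinite-order $v\in W(H)$ has $r_D(v)=0$ by Proposition~\ref{CCW}.
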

 \begin{proof}
  Let 
 $A=(a_{ij})\in M_n(\C X(G))$ be an idempotent with $a_{ij}=\sum_{x\in X(G)} a_{ij}(x)\cdot x$. To prove the weak Bass Trace Conjecture for $A$ amounts
 to show
 that $\epsilon(A)=\kappa(A)$. This is equivalent to show that the sum 
 $\Sigma:=\sum_{[x]\in [X(G)\setminus \{e\}]}r_A(x)=0$. For
 this we write 
 $\Sigma=U + V$ where 
 $$U=\sum_{[x]\in\ [X(G)],\, \rho(x)\neq e}r_A(x), \,\quad \text{and}\quad V=\Sigma -U\,.$$
We first show that $U=0$.
 Write $B$ for the image of $A$ by the map
 $M_n(\C X(G))\to M_n( \C (G\times G\times G))$ induced by $\rho$. The sum $U$ defined above is also given by
$$W = \sum_{[q]\in [G\times G\times G], q\neq e}r_B(q)$$
and this sum equals zero, because $G\times G\times G$ is torsion-free and by assumption $B$
satisfies that $\epsilon(B)=\kappa(B)$. We conclude that $W=U=0$.

Next, we show that $V=0$: The idempotent $A$ can be lifted to an
 idempotent $D\in M_n(\C X(H))$ for some finitely generated subgroup $H<G$
  (cf.\,Lemma \ref{lift}). Write $D=(d_{ij})$ with
  $d_{ij}=\sum_h d_{ij}(h)\cdot h$, with $h\in X(H)$.
 If $v\in X(H)$ satisfies $\rho (v)=e$
 and is of infinite order, then 
 $r_D(v)=0$ by Proposition \ref{CCW}.
 Because $X(G)$ is assumed to be torsion-free, none of the terms $r_D(v)$ with
 $v\in X(H)$ of finite order, can contribute to $V$, because such a $v$ maps to $e$ in the torsion-free group $X(G)$. Thus, $V$ is a sum of terms of the form 
 $r_D(v)$ with $v$ of infinite order and these are all individually equal to 0.
 This shows that $V$ must be equal to $0$. 
 \end{proof}
The following variation of the Main Theorem and the Proposition holds, if we replace the
ground ring $\C$ by $\Z$. The assumption that $X(G)$ is torsion-free
can then be dropped, because for an arbitrary group $H$, 
$\Z H$ does not have
any idempotent besides of $0$ and $1$, since an idempotent $x\in \Z H$
obviously satisfies $\kappa(x)=x_e\in \Z$. 
\begin{prop} Let $G$ be a group and assume that $\Z(G\times G\times G)$ satisfies the weak Bass Trace Conjecture and let $X(G)$ be the Sidki double of $G$. Then
$\Z X(G)$ satisfies the weak Bass Trace Conjecture as well.
\end{prop}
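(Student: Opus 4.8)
The plan is to reproduce the proof of Proposition \ref{P} as far as possible, isolating the one point at which torsion-freeness of $X(G)$ was used. Given an idempotent $A=(a_{ij})\in M_n(\Z X(G))$ we must show $\kappa(A)=\epsilon(A)$, equivalently that $\Sigma:=\sum_{[x]\in[X(G)\setminus\{e\}]}r_A(x)$ vanishes. As before write $\Sigma=U+V$ with $U=\sum_{[x],\,\rho(x)\neq e}r_A(x)$ and $V=\sum_{[x],\,x\in W(G)\setminus\{e\}}r_A(x)$; this is a partition of the conjugacy classes because $W(G)\lhd X(G)$ and $\rho$ sends conjugacy classes to conjugacy classes. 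For $U$, push $A$ forward along $\rho$ to an idempotent $B\in M_n(\Z(G\times G\times G))$; exactly as in the proof of Proposition \ref{P} one obtains $U=\sum_{[q]\in[G\times G\times G],\,q\neq e}r_B(q)=\epsilon(B)-\kappa(B)$, which is $0$ by the hypothesis that $\Z(G\times G\times G)$ satisfies the weak Bass Trace Conjecture. No torsion-freeness enters here, and none was needed for this step in the complex case either.

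For $V$, reduce to a finitely generated group as in Proposition \ref{P}: by Lemma \ref{lift}, $A$ lifts to an idempotent $D\in M_n(\Z X(H))$ with $H<G$ finitely generated; writing $\pi:X(H)\to X(G)$ for the natural map, one has $\pi^{-1}(W(G))=W(H)$ (by naturality of $\rho$ and injectivity of $H\times H\times H\hookrightarrow G\times G\times G$), hence $V=\sum_{[v],\,v\in W(H),\,\pi(v)\neq e}r_D(v)$. By Proposition \ref{CCW} the conjugacy classes of \emph{infinite-order} elements $v\in W(H)$ contribute nothing, term by term. This is precisely where the proof of Proposition \ref{P} terminated, torsion-freeness of $X(G)$ being used to discard the remaining, finite-order, elements. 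Over $\Z$ these may survive, and the whole problem reduces to showing $V_{\mathrm{tors}}:=\sum_{[x],\,x\in W(G)_{\mathrm{tors}}\setminus\{e\}}r_A(x)=0$; since the rest of $U+V$ has been shown to vanish, in fact $\Sigma=V_{\mathrm{tors}}$.

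To attack $V_{\mathrm{tors}}$ I would use that $W(G)_{\mathrm{tors}}$, being the torsion subgroup of the abelian normal subgroup $W(G)$, is itself normal in $X(G)$, and pass to $\overline X:=X(G)/W(G)_{\mathrm{tors}}$ with the induced idempotent $\overline A$. Then $\rho$ factors through $\overline X$, its kernel there being $\overline W:=W(G)/W(G)_{\mathrm{tors}}$, which is now torsion-free; and by Lemma \ref{L1} the conjugation action of $\overline X$ on $\overline W$ still factors through a finitely generated abelian group, so the argument of Proposition \ref{CCW} applies to it verbatim. Re-running the two steps above for $\overline A$ therefore makes both the corresponding $U$- and $V$-parts vanish, giving $\epsilon(\overline A)=\kappa(\overline A)$. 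The hard part — the step I expect to be the real obstacle — is to get $\epsilon(A)=\kappa(A)$ out of this: one has $\epsilon(A)=\epsilon(\overline A)$ and $\kappa(\overline A)=\kappa(A)+V_{\mathrm{tors}}$, so the identity on $\overline X$ by itself only reformulates, rather than establishes, the vanishing of $V_{\mathrm{tors}}$. What is genuinely needed is a vanishing statement for the Hattori-Stallings traces of $(\Z X(H))^{n}D$ at the finite-order elements of $W(H)$, to which Proposition \ref{CCW} gives no access; a plausible route is to restrict this projective module to the finite cyclic subgroups generated by such elements and to use the structure of $W(G)_{\mathrm{tors}}$ (in the examples known to carry torsion, such as $X(\Z^{m})$, it is a $2$-group, and finitely generated projective modules over the integral group ring of a prime-power group have Hattori-Stallings rank supported at the identity), together with the non-negativity of $\kappa$ from Zaleskii's theorem applied to $A$ and to $1-A$.
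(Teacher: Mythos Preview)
Your setup and the treatment of $U$ and of the infinite-order part of $V$ are correct and match the paper's argument. The genuine gap is precisely where you locate it: the vanishing of $V_{\mathrm{tors}}$. Your quotient construction, as you yourself note, is circular --- the identity $\kappa(\overline A)=\kappa(A)+V_{\mathrm{tors}}$ simply restates the problem --- and the fallback speculation (restricting to cyclic subgroups, hoping $W(G)_{\mathrm{tors}}$ is a $2$-group as in the few computed examples) cannot support a general theorem.

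What you are missing is a result of Linnell \cite{Linnell}*{Lemma 4.1}: for \emph{any} group $H$ and any idempotent matrix $B\in M_n(\Z H)$, the Hattori--Stallings rank satisfies $r_B(x)=0$ at every non-identity element $x\in H$ of finite order. This is the integral-coefficient substitute for the torsion-freeness hypothesis in Proposition~\ref{P}; it kills $V_{\mathrm{tors}}$ (in fact all torsion contributions, not merely those coming from $W(H)$) in one line. The paper's proof invokes exactly this after lifting to $X(K)$ with $K$ finitely generated, and then finishes as in Proposition~\ref{P}. You were circling the right idea when you mentioned that projectives over $\Z[p\text{-group}]$ have rank supported at $e$ --- Linnell's lemma is the general form of that observation, valid without any hypothesis on the ambient group or on the structure of its torsion.
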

\begin{proof}
We follow the line of proof of the Proposition \ref{P}. First, we 
observe that we can lift an idempotent matrix $A\in M_n(\Z X(G))$
 to an idempotent $B\in M_n(\Z X(K))$
for some finitely generated subgroup $K<G$ (cf. \ref{lift}).
As we can see $B$ as a matrix in $M_n(\Z X(K))$ we just need to
observe that elements $x\neq e$ in $X(K)$ of finite order have
$r_B(x)=0$ by Linnell's result {\cite{Linnell}*{Lemma 4.1}}.
Also, by assumption $G\times G\times G$ satisfy the weak Bass Conjecture
for $\Z (G\times G\times G)$. It follows that the image
of $\rho: X(G) \to G\times G\times G$ satisfies the weak Bass Trace
Conjecture as well. The rest of the proof is then as in the case of the Proposition \ref{P}.
 \end{proof}
\section{Perfect groups and Stem-Extensions}
A concise reference for this section is Kervaire's note
\cite{K}, and a more comprehensive treatment can be found in Gruenberg's book
\cite{G}*{Chapter 9, \S\! 9.9}. The article by Eckmann-Hilton-Stammbach
\cite{EHS} contains everything on stem-extensions we need here.
\begin{defi}
A central extension 
$C\to H\to Q$ is called a {\it stem-extension}, if $C<[H,H]$.
\end{defi}
Central extensions ${\mathcal E}:  C\to H\to Q$ are classified by elements in $H^2(Q,C)$.
The {\sl{5-term sequence}} of $\mathcal{E}$ has the form
$$ H_2(H,\Z)\to H_2(Q,\Z)\stackrel{\partial{(\mathcal E)}}\longrightarrow H_1(C,\Z)\to H_1(H,\Z)\to
H_1(Q,\Z)\to \{e\}\,.$$
\begin{defi}A group $G$ is called {\it perfect} if $G=[G,G]$, or equivalently, $H_1(G,\Z)=\{e\}$. A group $G$ is called {\it super-perfect} if it is perfect and moreover $H_2(G,\Z)=\{e\}$.\end{defi}
For $Q$ perfect, the {\sl{Universal Coefficient Theorem}} implies that
$$H^2(Q, C)\cong \operatorname{Hom}(H_2(Q,\Z), C)$$
so that the central extension $\mathcal E$ corresponds to a homomorphism $\mathcal E _*: H_2(Q,\Z)\to C\,.$
The homomorphism $\mathcal E_*$ can be identified with $\partial(\mathcal E)$ in the 5-term sequence,
if one identifies $H_1(C,\Z)$ with $C$\,. 
\begin{lemm}[Eckmann-Hilton-Stammbach \cite{EHS}*{Prop. 4.1}]\label{equivalent} Let $\mathcal E: C\to G\to Q$ be a central extension with $Q$ a perfect group. Then the
following are equivalent:
\begin{enumerate}
\item[(1)] the classifying homomorphism $\mathcal E_*: H_2(Q, \Z)\to C$ is surjective ;
\item[(2)] $\mathcal E$ is a stem-extension;
\item[(3)] $G$ is perfect.
\end{enumerate}
\end{lemm}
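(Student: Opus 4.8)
The plan is to read all three equivalences directly off the 5-term exact sequence of $\mathcal E$ recorded above. Since $C$ is central, hence abelian, we identify $H_1(C,\Z)$ with $C$; since $Q$ is perfect, $H_1(Q,\Z)=\{e\}$; and, as noted just before the statement, under these identifications the connecting map $\partial(\mathcal E)$ is the classifying homomorphism $\mathcal E_*$. So the 5-term sequence collapses to an exact sequence
$$ H_2(G,\Z)\longrightarrow H_2(Q,\Z)\xrightarrow{\ \mathcal E_*\ } C\xrightarrow{\ j\ } H_1(G,\Z)\longrightarrow \{e\}, $$
where $j$ is induced by the inclusion $C\hookrightarrow G$, i.e.\ $j$ is the composite $C\hookrightarrow G\twoheadrightarrow G/[G,G]$.

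The one computation to make is $\ker(j)$. An element $c\in C$ lies in $\ker(j)$ exactly when its image in $G/[G,G]$ is trivial, that is, exactly when $c\in[G,G]$; hence $\ker(j)=C\cap[G,G]$. Exactness of the displayed sequence at $C$ then yields the key identity $\operatorname{im}(\mathcal E_*)=C\cap[G,G]$.

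Granting this, the equivalences follow formally. For $(1)\Leftrightarrow(2)$: $\mathcal E_*$ is surjective iff $\operatorname{im}(\mathcal E_*)=C$, i.e.\ iff $C\cap[G,G]=C$, i.e.\ iff $C<[G,G]$, which is precisely the defining condition that $\mathcal E$ be a stem-extension. For $(1)\Leftrightarrow(3)$: exactness of the sequence at $H_1(G,\Z)$, together with surjectivity of $j$ (which comes from $H_1(Q,\Z)=\{e\}$), gives an isomorphism $H_1(G,\Z)\cong C/\operatorname{im}(\mathcal E_*)$; hence $G$ is perfect, i.e.\ $H_1(G,\Z)=\{e\}$, iff $\mathcal E_*$ is onto. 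Chaining $(1)\Leftrightarrow(2)$ with $(1)\Leftrightarrow(3)$ proves all three statements equivalent.

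There is essentially no obstacle here: the only points requiring a little care are the bookkeeping for the identifications $H_1(C,\Z)=C$, $H_1(Q,\Z)=\{e\}$ and $\partial(\mathcal E)=\mathcal E_*$ (all recorded just before the statement) and the elementary observation $\ker\bigl(C\to G/[G,G]\bigr)=C\cap[G,G]$. It is worth remarking that perfectness of $Q$ is genuinely used --- to kill $H_1(Q,\Z)$ and to make the Universal Coefficient identification available --- whereas the equivalence $(1)\Leftrightarrow(2)$ by itself holds for an arbitrary central extension.
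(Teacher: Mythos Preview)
Your proof is correct and follows essentially the same route as the paper: both read the equivalences off the 5-term exact sequence, using $H_1(Q,\Z)=\{e\}$ and the identification $\partial(\mathcal E)=\mathcal E_*$. The only cosmetic difference is in how condition (2) is linked in: the paper deduces $(2)\Leftrightarrow(3)$ directly via the elementary fact that $Q$ perfect forces $G=C[G,G]$, whereas you instead prove $(1)\Leftrightarrow(2)$ by computing $\operatorname{im}(\mathcal E_*)=\ker(j)=C\cap[G,G]$ from exactness at $C$; your version has the small bonus that this particular equivalence does not use perfectness of $Q$.
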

\begin{proof} Because $Q$ is perfect, the 5-term sequence for $\mathcal E$
ends with
$$H_2(Q,\Z)\stackrel{\partial{(\mathcal E)}}\longrightarrow C \to G/[G,G]\to \{e\}\,.$$
Identifying $\mathcal E_*$ with $\partial(\mathcal E)$ we see that $(1)\Longleftrightarrow (3)$.
If $G$ is perfect, $C<[G,G]=G$, so $(3) \Rightarrow (2)$. Since $Q$ is 
assumed to be perfect,
$G=C[G,G]$ and assuming (2), i.e.,\, $C<[G,G]$, we have $G=[G,G]$.
Thus $(2)\Rightarrow (3)$.
\end{proof}
The {\sl{universal}} stem-extension $\mathcal U$ of a perfect group 
$Q$ is obtained by choosing 
$C=H_2(Q,\Z)$ and for $\mathcal U_*: H_2(Q,\Z)\to C$ the identity homomorphism, yielding
 $$\mathcal U:  H_2(Q,\Z)\longrightarrow \tilde{H}\longrightarrow Q\,.$$ 
It follows from Lemma \ref{equivalent},
 $(1)\Rightarrow (2)$, that
  $\mathcal U$ is indeed a stem-extension, i.e. that $H_2(Q,\Z)$ is contained in ${[\tilde{H},\tilde{H}]}$. Moreover, the universal stem-extension has the following property.
\begin{lemm}[Kervaire \cite{K}*{Proposition 1}]\label{K}
Let $Q$ be a perfect group and
 $$\mathcal E: C\longrightarrow H \longrightarrow Q$$
 a stem-extension. If
$\mathcal U: H_2(Q,\Z)\to \tilde{H}\to Q$ denotes the universal stem-extension,
then $\tilde{H}$ is super-perfect and $\mathcal U$ maps onto $\mathcal E$ to yield a commutative diagram
 $$\xymatrix
 { H_2(Q,\Z) \ar@{->>}[d]^{\partial(\mathcal E)}\ar[r]& \tilde{H} \ar@{->>}[d]
 \ar[r] &Q\ar@{=}[d]\\
C\ar[r]&H\ar[r] & Q.\\
 }
 $$ 
\end{lemm}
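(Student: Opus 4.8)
The plan is to prove Kervaire's lemma (Lemma \ref{K}) by exploiting the universal property of $\mathcal{U}$ as the terminal object among stem-extensions of $Q$, together with the exactness of the 5-term sequence. First I would establish super-perfectness of $\tilde{H}$. By Lemma \ref{equivalent}, since $\mathcal{U}$ is a stem-extension of the perfect group $Q$, the group $\tilde{H}$ is perfect. To see that $H_2(\tilde{H},\Z)=\{e\}$, I would write down the 5-term sequence for $\mathcal{U}$ itself, which (since $Q$ is perfect and $\mathcal{U}_*=\mathrm{id}$) reads
$$H_2(\tilde{H},\Z)\to H_2(Q,\Z)\xrightarrow{\ \cong\ }H_2(Q,\Z)\to H_1(\tilde{H},\Z)=\{e\},$$
so the map $H_2(\tilde{H},\Z)\to H_2(Q,\Z)$ is zero; combined with a dual/inflation argument (or directly with the fact that $\tilde{H}\to Q$ induces an iso on $H_1$ and that the Lyndon–Hochschild–Serre spectral sequence for the central extension collapses appropriately), one deduces $H_2(\tilde{H},\Z)=\{e\}$. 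This is the standard fact that the universal central extension of a superperfect group (equivalently, the universal stem-extension) is superperfect.

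Next I would construct the vertical maps of the diagram. Given the stem-extension $\mathcal{E}:C\to H\to Q$, by Lemma \ref{equivalent} its classifying homomorphism $\partial(\mathcal{E}):H_2(Q,\Z)\to C$ is surjective; this is the left vertical arrow. To build the middle arrow $\tilde{H}\to H$ I would invoke the universal property: central extensions of $Q$ by $C$ are classified by $H^2(Q,C)\cong\mathrm{Hom}(H_2(Q,\Z),C)$ (Universal Coefficient Theorem, $Q$ perfect), and $\mathcal{E}$ corresponds to $\partial(\mathcal{E})$. Pulling back $\mathcal{E}$ along nothing, or rather pushing out $\mathcal{U}$ along $\partial(\mathcal{E}):H_2(Q,\Z)\to C$, yields a central extension of $Q$ by $C$ whose classifying map is $\partial(\mathcal{E})\circ\mathrm{id}=\partial(\mathcal{E})$, hence it is equivalent to $\mathcal{E}$. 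The pushout construction comes with a canonical map $\tilde{H}\to H$ covering $\mathrm{id}_Q$ and restricting to $\partial(\mathcal{E})$ on the kernels, which makes the square commute. Surjectivity of $\tilde{H}\to H$ follows since it is surjective on kernels (as $\partial(\mathcal{E})$ is onto) and on quotients (both are $Q$), by the five lemma / a diagram chase.

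The main obstacle I expect is not any single hard step but rather getting the bookkeeping of pushouts of central extensions exactly right — in particular verifying that the pushout of $\mathcal{U}$ along $\partial(\mathcal{E})$ really is equivalent to $\mathcal{E}$ and not merely to some extension with the same kernel and quotient. This hinges on the compatibility of the classifying-class formalism under pushout (functoriality of $H^2(Q,-)$ in the coefficient module) with the identification $H^2(Q,C)\cong\mathrm{Hom}(H_2(Q,\Z),C)$, which is where perfectness of $Q$ is essential. Once that naturality is in hand, everything else is formal: superperfectness of $\tilde{H}$ from its own 5-term sequence, and the commuting diagram from the universal property. I would cite \cite{EHS} and \cite{K} for the precise statements rather than redoing the homological algebra in full.
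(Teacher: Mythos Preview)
The paper does not prove this lemma: it is quoted from Kervaire \cite{K}*{Proposition~1} and used as a black box, with no argument supplied. Your sketch therefore goes beyond what the paper does. The construction of the map $\tilde{H}\to H$ by pushing $\mathcal{U}$ out along $\partial(\mathcal{E})$ and identifying the result with $\mathcal{E}$ via $H^2(Q,C)\cong\operatorname{Hom}(H_2(Q,\Z),C)$ is correct and is the standard argument; surjectivity via Lemma~\ref{equivalent} and a diagram chase is fine.

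The one soft spot is the super-perfectness of $\tilde{H}$. From the 5-term sequence of $\mathcal{U}$ you correctly extract that $H_2(\tilde{H},\Z)\to H_2(Q,\Z)$ is the zero map, but this alone does not force $H_2(\tilde{H},\Z)=\{e\}$, and your appeal to a ``spectral-sequence collapse'' is a placeholder rather than an argument. The usual route (as in Kervaire, or Milnor's treatment of universal central extensions) is different: one shows that every central extension $A\to E\to \tilde{H}$ splits, using the lemma that a composite of central extensions with perfect top group is again central, together with the universal property of $\tilde{H}\to Q$ that you have already established via the pushout. Since you explicitly plan to cite \cite{K} and \cite{EHS} for these details rather than redo them, your treatment ends up agreeing with the paper's in practice.
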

For $G$ a perfect group, the Sidki Double $X(G)$ is particularly simple to
describe.
\begin{prop}\label{Sstem}
 Let $G$ be a perfect group. Then 
 $$W(G)\to X(G)\stackrel{\rho}\longrightarrow G\times G\times G$$
 is a stem-extension, i.e., $\rho$ is surjective  and
 $W(G)$ is a central subgroup of $X(G)$ contained in $[X(G),X(G)]$.
  \end{prop}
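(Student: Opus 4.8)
The plan is to establish the three assertions—$\rho$ surjective, $W(G)$ central in $X(G)$, and $W(G)<[X(G),X(G)]$—and then invoke Lemma \ref{equivalent} to package them as a stem-extension. Surjectivity of $\rho$ is the easiest point: by definition $\rho(g)=(g,g,1)$ and $\rho(g^\psi)=(1,g,g)$, so the image contains $(g,g,1)$ and $(1,g,g)$ for all $g\in G$; multiplying $(g,g,1)$ by the inverse of $(h,h,h)$-type products one gets $(g,1,1)$, $(1,1,g)$, hence $(1,g,1)$, for every $g$, and these generate $G\times G\times G$. (One may also simply note that $\rho$ is the composite of the standard surjection $X(G)\to G\times G$ with... no—better to argue directly as above, since the target is the triple product.) Centrality of $W(G)$ in $X(G)$ is already recorded: it is Sidki's result cited in Section \ref{Hattori-Stallings trace on Sidki Doubles} that $W(G)=\ker\rho$ is abelian and normal, and in the proof of Lemma \ref{L1} it is shown that $W(G)=L(G)\cap D(G)$ is central in $L(G)D(G)$; since $G$ is perfect, $G'\times G'\times G'=G\times G\times G=\rho(L(G)D(G))$, so $\rho(L(G)D(G))=\rho(X(G))$, whence $X(G)=L(G)D(G)\cdot W(G)=L(G)D(G)$ and $W(G)$ is central in all of $X(G)$.

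It remains to show $W(G)\subseteq[X(G),X(G)]$. Here I would use Lemma \ref{equivalent}: we now know $W(G)\to X(G)\to G\times G\times G$ is a \emph{central} extension, and $G\times G\times G$ is perfect because $G$ is. By the equivalence $(2)\Leftrightarrow(3)$ it suffices to prove that $X(G)$ itself is perfect. But $X(G)$ is generated by $G$ and $G^\psi$, and $G$ is perfect, so $G=[G,G]\subseteq[X(G),X(G)]$; likewise $G^\psi=[G^\psi,G^\psi]\subseteq[X(G),X(G)]$; since these two subgroups generate $X(G)$, we get $X(G)=[X(G),X(G)]$. Then $(3)\Rightarrow(2)$ of Lemma \ref{equivalent} gives $W(G)<[X(G),X(G)]$, i.e. the extension is a stem-extension.

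The only mildly delicate point is the bookkeeping in the surjectivity/centrality step—making sure that perfectness of $G$ genuinely forces $\rho(L(G)D(G))$ to be all of $\rho(X(G))$ and hence $X(G)=L(G)D(G)$, so that the centrality of $W(G)$ established in Lemma \ref{L1} upgrades from ``central in $L(G)D(G)$'' to ``central in $X(G)$''. Everything else is a direct application of the Eckmann–Hilton–Stammbach equivalence together with the trivial observation that a group generated by perfect subgroups is perfect; no real obstacle is expected.
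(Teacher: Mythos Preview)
Your argument is essentially correct, but one step is sloppy and one step differs from the paper's route.

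\textbf{Surjectivity.} Your direct argument has a gap: you invoke ``$(h,h,h)$-type products'' in the image of $\rho$, but $\rho(h h^\psi)=(h,h^2,h)$, not $(h,h,h)$, and for a non-perfect $G$ the element $(g,1,1)$ is generally \emph{not} in $\rho(X(G))$ (take $G=\Z$). So this step, as written, does not use perfectness and is false in general. Fortunately you recover the correct argument a few lines later when you note that $G'\times G'\times G'\subseteq\rho(L(G)D(G))$ and invoke $G'=G$; this is exactly the paper's argument for surjectivity. You should drop the first attempt and keep only this.

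\textbf{Centrality.} Your reduction $\rho(L(G)D(G))=\rho(X(G))\Rightarrow X(G)=L(G)D(G)$ (using $W(G)\subseteq L(G)\cap D(G)$) matches the paper verbatim.

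\textbf{The stem condition $W(G)<[X(G),X(G)]$.} Here you genuinely diverge. The paper observes directly that
\[
W(G)\subseteq D(G)=[G,G^\psi]\subseteq [X(G),X(G)],
\]
which is a one-line containment requiring no hypothesis on $G$. You instead first establish centrality, then prove $X(G)$ is perfect (since it is generated by the perfect subgroups $G$ and $G^\psi$), and appeal to the equivalence $(3)\Leftrightarrow(2)$ of Lemma~\ref{equivalent}. Your route is valid and has the pleasant feature of exhibiting $X(G)$ as perfect explicitly, but it is more circuitous: it requires centrality to be in place before Lemma~\ref{equivalent} applies, whereas the paper's containment $W(G)\subset D(G)$ is immediate and independent of the other two assertions.
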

 \begin{proof}
 First we check that $\rho: X(G)\to G\times G\times G$ is surjective.
 It is known that $\rho$ maps the subgroup $D(G)L(G)<X(G)$ to a subgroup
 of $G\times G\times G$ which contains the commutator subgroup of
  $G\times G\times G$ (see Proof of Lemma \ref{L1} above).  Because $G$
  and therefore also $G\times G\times G$ is perfect, this implies that 
 $$\rho(L(G)D(G)) = \rho(X(G))=G\times G\times G$$
 and in particular $\rho$
 is surjective. Next, we check that $W(G)<X(G)$ is central. We know already that
 $W(G)$ is central as a subgroup of $L(G)D(G)$, so it suffices to see that 
 $X(G)=L(G)D(G)$. Because
 the kernel of $\rho$ lies in $L(G)D(G)$
 it follows that 
 $$X(G)/L(G)D(G)\simeq \rho(X(G))/\rho(L(G)D(G))=\{e\}$$  
 and thus $L(G)D(G)=X(G)$. To see that $W(G)<[X(G),X(G)]$, we observe that
 as $W(G)<D(G)$ and $D(G)=[G,G^\psi]<[X(G),X(G)]$.
   \end{proof}
\begin{coro}
Let $G$ be a super-perfect group. Then
$$X(G)\cong G\times G\times G\,.$$
\end{coro}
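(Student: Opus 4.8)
The plan is to combine the two previous results: Proposition~\ref{Sstem} tells us that for a perfect group $G$ the sequence $W(G)\to X(G)\stackrel{\rho}\longrightarrow G\times G\times G$ is a stem-extension, and for a \emph{super}-perfect group the obstruction to splitting it vanishes. First I would record that, since $G$ is in particular perfect, $\rho$ is surjective with central kernel $W(G)$, so $\mathcal E\colon W(G)\to X(G)\to G\times G\times G$ is a genuine central extension; and since $G$ is perfect so is $G\times G\times G$ (a product of perfect groups is perfect), so the universal-coefficient identification $H^2(G\times G\times G, W(G))\cong \operatorname{Hom}(H_2(G\times G\times G,\Z), W(G))$ from the discussion preceding Lemma~\ref{equivalent} applies and classifies $\mathcal E$ by the homomorphism $\mathcal E_*\colon H_2(G\times G\times G,\Z)\to W(G)$.

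Next I would invoke the Künneth formula (over $\Z$) to compute $H_2(G\times G\times G,\Z)$ from $H_1(G,\Z)=\{e\}$ and $H_2(G,\Z)=\{e\}$: every Tor-term involves an $H_1$ and hence vanishes, and each tensor summand of $H_2$ of the product is built from $H_2$'s and $H_1$'s of the factors, all of which are trivial. Hence $H_2(G\times G\times G,\Z)=\{e\}$. Therefore the classifying map $\mathcal E_*$ is the zero homomorphism, and the corresponding element of $H^2(G\times G\times G, W(G))$ is zero, so the central extension $\mathcal E$ splits: $X(G)\cong W(G)\rtimes (G\times G\times G)$, and the action is trivial by centrality, so $X(G)\cong W(G)\times (G\times G\times G)$. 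It remains to see $W(G)=\{e\}$: since $\mathcal E$ is a stem-extension we have $W(G)<[X(G),X(G)]$, but for a split central extension the kernel is a direct factor and maps onto its abelianization, so $W(G)\cap[X(G),X(G)]$ projects to $[W(G),W(G)]=\{e\}$ (as $W(G)$ is abelian, being central); combining $W(G)<[X(G),X(G)]$ with this forces $W(G)=\{e\}$. Hence $X(G)\cong G\times G\times G$.

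Alternatively, and perhaps more cleanly, I would phrase the last step directly via Lemma~\ref{equivalent} and Lemma~\ref{K}: the universal stem-extension of the perfect group $Q:=G\times G\times G$ is $H_2(Q,\Z)\to \tilde H\to Q$, which is trivial since $H_2(Q,\Z)=\{e\}$, so $\tilde H\cong Q$ and $Q$ is itself super-perfect; by Lemma~\ref{K} the stem-extension $\mathcal E$ is a quotient of the universal one, hence $X(G)$ is a quotient of $\tilde H\cong Q$ compatibly with the maps to $Q$, which forces $W(G)=\{e\}$ and $X(G)\cong G\times G\times G$.

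The main obstacle is the homological input: one must correctly apply the Künneth theorem to a triple product and check that super-perfectness of $G$ really propagates to $G\times G\times G$ (i.e.\ that no surviving Tor or cross terms appear in $H_2$). This is routine but is the only place where one uses the hypothesis $H_2(G,\Z)=\{e\}$ as opposed to mere perfectness; everything else is a formal consequence of Proposition~\ref{Sstem} and the stem-extension machinery of this section.
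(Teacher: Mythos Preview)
Your proposal is correct, and the alternative argument you give at the end is exactly the paper's proof: use Proposition~\ref{Sstem} to get a stem-extension, apply K\"unneth to see $H_2(G\times G\times G,\Z)=\{e\}$, then invoke Lemma~\ref{K} to obtain a surjection $H_2(G\times G\times G,\Z)\twoheadrightarrow W(G)$, whence $W(G)=\{e\}$. Your first route via splitting is also valid but is a detour; indeed, once you know $H_2(Q,\Z)=\{e\}$ you could skip both the splitting and Lemma~\ref{K} and conclude directly from Lemma~\ref{equivalent}\,(1) that the surjective classifying map $\mathcal E_*\colon \{e\}\to W(G)$ forces $W(G)=\{e\}$.
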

\begin{proof}
The assumption on $G$ being perfect implies that $G\times G\times G$ is a perfect group.
According to Proposition \ref{Sstem}, the extension
$$W(G)\to X(G)\to G\times G\times G$$ 
is a stem-extension. The associated
universal stem-extension has the form
$$H_2(G\times G\times G,\Z)\longrightarrow \widetilde{X(G)}\longrightarrow G\times G\times G\,.$$
From Lemma \ref{K} we see that  there is a surjective homomorphism 
$$H_2(G\times G\times G,\Z)\longrightarrow W(G)\,.$$
But as $G$ is super-perfect, the $K\ddot{u}nneth$-Formula shows that
$G\times G$ as well as $G\times G\times G$ are super-perfect too. Thus $H_2(G\times G\times G,\Z)=\{e\}$ and therefore $W(G)=\{e\}$.
\end{proof}
\section{Examples}\label{examples}
General theorems on the structure of Sidki Doubles are difficult to obtain.
However, it is easy to see that the Sidki Double of an amenable group is amenable using the map $\rho: X(G)\to G\times G\times G$ and observing that both the kernel $W(G)$ of $\rho$ and the image $\rho(X(G))$ are
amenable. But, for instance,  we don't know whether for 
an a-T-menable group $G$ its Sidki Double is a-T-menable as well.
We give an example of groups $H<G$ for which the induced map of the Sidki doubles $X(H)\to X(G)$
fails to be injective (Example \ref{subgroup}) and we show that for $G$ of finite
cohomological dimension, $X(G)$ can have infinite rational
cohomological dimension (Example \ref{cd}). We also give an example
of a torsion-free finitely presented group $G$ with Sidki Double $X(G)$ admitting a finitely generated non-free, projective $\Q X(G)$-module for 
$\Q$ the rational numbers (Example \ref{nonfreeproj}).

\medskip

In Section 2 we proved results on $X(G)$ by passing to $X(H)$ for a finitely generated subgroup $H<G$; the induced map $X(H)\to X(G)$ will not be
injective in general  as the following example shows.
 \begin{exam}\label{subgroup}
 There is a finitely presented group $G$ and a finitely generated subgroup $H<G$
 such that the natural maps $X(H)\to X(G)$ and $W(H)\to W(G)$
 are not injective.
 \end{exam}
 \begin{proof}
 It was proved in \cite{KS}*{Lemma 9.1} that for $\Gamma=\Z\times \Z$,
 there is a natural isomorphism $W(\Gamma)\cong H_2(\Gamma,\Z)=\Z$. Therefore,
 by writing $\Q\times\Q$ as a direct limit of groups $\Z\times\Z$
and using Lemma \ref{lim}, we see that $W(\Q\times\Q)\cong\Q$. 
One can embed $\Q$
into a finitely presented group $T$ which is simple and has type $FP_\infty$ 
(cf.\,Belk, Hyde and Matucci \cite{BHM}*{Theorem 3 and 4}). Thus $\Q\times\Q<T\times T$ with $T\times T$
perfect and of type $FP_\infty$. By Theorem B of \cite{KS} it follows that
 $W(T\times T)$ is a finitely generated abelian group. We conclude that $W(\Q\times \Q)\cong \Q$
cannot map injectively to $W(T\times T)$.
Now $\Q\times\Q$ is the union
of subgroups $\{S_\alpha, \alpha\in I\}$ with every $S_\alpha$
isomorphic to $\Z\times\Z$. Therefore we can find an  index $\beta\in I$ and
$S_\beta<\Q\times\Q$ such that the map $W(S_\beta)\to W(T\times T)$ induced by $S_\beta\to \Q\times\Q\to T\times T$ is not injective either.  Taking 
$$H:=S_\beta<\Q\times\Q<T\times T=:G$$ yields the desired example.
 \end{proof}
 Bridson and Kochloukova asked in \cite{BK}*{Question 5.3}, whether
for a finitely generated group $F$ of cohomological dimension one, $X(F)$ has finite cohomological dimension. We give
an example of a group $G$ with cohomological dimension 2
but with $X(G)$ having infinite rational cohomological dimension.
 \begin{exam}\label{cd}
 There is a group $G$ of cohomological dimension 2 whose
 Sidki Double has infinite rational cohomological dimension.
 \end{exam}
 \begin{proof}
 Take a group $\Sigma^2$ with classifying space $K(\Sigma^2,1)$ a finite 2-dimensional
 $CW$-complex with the same homology as the 2-sphere $S^2$ (for the construction of such a group see for instance  Maunder \cite{M}).
Put $G=\ast_\N\Sigma^2$ a countably infinite free product of groups $\Sigma^2$.
$G$ has cohomological dimension 2 and $H_2(G,\Z)=\oplus_\N \Z $. According to \cite{KS} (see also \cite{Rocco}), the abelian group $W(G)$ maps onto $H_2(G,\Z)$ and we see that $W(G)$
and hence
$X(G)$ contains a free abelian subgroup of infinite rank. We conclude that
the rational cohomological dimension of $G$ must be infinite. 
\end{proof}
 The final example concerns a group $G$ with torsion-free Sidki Double $X(G)$,
 admitting a finitely generated projective $\Q X(G)$-module which is not free. For a group homomorphism $f:K\to L$, we write
$f_*$ for the functor from ${\Q}K$-modules to ${\Q}L$-modules defined by $f_*(M)= {\Q}L\otimes_{{\Q}K}M$, noting that the functor $f_*$ maps projectives to projectives.
 \begin{exam}\label{nonfreeproj} Let $G$ be the Baumslag-Solitar group $BS(1,n)$ with $n>1$.
 Then $X(G)$ is torsion-free and admits a projective, non-free $\Q X(G)$-module $\overline{P}$
 satisfying $\epsilon(\overline{P})=\kappa(\overline{P})=1$.
 \end{exam}
 \begin{proof}
 Levin proved that for $n>1$,  $G=BS(1,n)$ admits a rank 1 projective, non-free
 $\Q G$-module $P$, see \cite{Lewin}*{Example, p. 63}.
 Also $W(BS(1,n))=0$ for $n>1$, see \cite{KS}*{Lemma 9.1,(3)} and
 it follows that $X(BS(1,n))$ is torsion-free. 
There is an injection $\sigma: G\to X(G)$, with retraction $\tau: X(G)\to G$.
Because $\tau_* \sigma_*(P)=P$ we conclude that $\overline{P}=\sigma_* P=\Q X(G)\otimes_{\Q G} P$ is a projective, non-free
 module over $\Q X(G)$. Because $P\oplus \Q G\cong \Q G\oplus \Q G$
 (cf.\,\cite{Lewin}), we also have $\sigma_*(P)\oplus \Q X(G)\cong
 \Q X(G)\oplus \Q X(G)$ and thus
 $\epsilon(P)=\kappa(P)=\epsilon(\overline{P})=\kappa(\overline{P})=1$.
 \end{proof}

We now explain
how to construct torsion-free geometrically finite groups $G$ (groups admitting a $K(G,1)$ which is homotopy equivalent to a finite CW-complex), such that $X(G)$
contains non-trivial elements of finite order.
\begin{prop}\label{torsion}
Let $G$ be a perfect group.  
 If $H_2(G,\Z)$ is a torsion group and contains an element of order $n>1$,
then $X(G)$ contains an element of order $n$ too.
    \end{prop}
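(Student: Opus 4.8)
The plan is to produce the required element of order $n$ inside the central abelian subgroup $W(G)\lhd X(G)$, by comparing two different descriptions of $W(G)$. First I would invoke Proposition~\ref{Sstem}: since $G$ is perfect, $W(G)\to X(G)\stackrel{\rho}\to G\times G\times G$ is a stem-extension, so $G\times G\times G$ is perfect, $X(G)$ is perfect, and $W(G)$ is a central abelian subgroup of $X(G)$. Being a stem-extension with perfect quotient, Lemma~\ref{equivalent} applies and tells us that the classifying homomorphism $H_2(G\times G\times G,\Z)\twoheadrightarrow W(G)$ is surjective. Since $G$ is perfect we have $H_1(G,\Z)=0$, so the K\"unneth formula gives $H_2(G\times G\times G,\Z)\cong H_2(G,\Z)^{3}$; as $H_2(G,\Z)$ is a torsion group by hypothesis, this group is torsion, and therefore so is its quotient $W(G)$.

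The second ingredient is the surjection $W(G)\twoheadrightarrow H_2(G,\Z)$ of Kochloukova--Sidki \cite{KS} (the same fact already used in Example~\ref{cd}). The proof is then completed by the following elementary observation: if $A$ is a torsion abelian group admitting a surjection onto a group $B$ that contains an element of order $n$, then $A$ itself contains an element of order $n$. Indeed, if $b\in B$ has order $n$, lift it to $a\in A$; since $A$ is torsion, $a$ has finite order $m$, and $n\mid m$ because the order of $b$ divides that of $a$; then $(m/n)\,a$ has order exactly $n$. Applying this with $A=W(G)$ and $B=H_2(G,\Z)$ produces an element of order $n$ in $W(G)$, hence an element of order $n$ in $X(G)$.

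I do not expect a serious obstacle here: the only input that is not formal is the Kochloukova--Sidki surjection $W(G)\twoheadrightarrow H_2(G,\Z)$, and the only point requiring care is that the argument genuinely uses \emph{both} descriptions of $W(G)$ --- as a quotient of $H_2(G\times G\times G,\Z)$, which forces $W(G)$ to be torsion, and as a group surjecting onto $H_2(G,\Z)$, which lets one extract the order-$n$ element --- with the elementary lemma above serving to glue these two facts together.
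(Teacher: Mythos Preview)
Your proposal is correct and follows essentially the same route as the paper's proof: both use Proposition~\ref{Sstem} and the K\"unneth formula to see that $W(G)$ is a quotient of $H_2(G,\Z)^3$ and hence torsion, then invoke the surjection $W(G)\twoheadrightarrow H_2(G,\Z)$ and the elementary lifting argument to extract an element of order $n$. The only cosmetic differences are that the paper appeals to Lemma~\ref{K} (rather than Lemma~\ref{equivalent}) for the surjection $H_2(G\times G\times G,\Z)\twoheadrightarrow W(G)$, and cites \cite{Rocco} (rather than \cite{KS}) for the surjection $W(G)\twoheadrightarrow H_2(G,\Z)$.
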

 \begin{proof}
 We observe that for a perfect group $G$, the $K\ddot{u}nneth$-Formula
 implies that
 $$H_2(G\times G\times G,\Z)\cong H_2(G,\Z)\oplus H_2(G,\Z)\oplus H_2(G,\Z)\,.$$ 
 Because $W(G)\to X(G)\to G\times G\times G$ is a stem-extension, we see from Lemma \ref{K}, that the universal stem-extension of
 $G\times G\times G$ yields a surjective homomorphism
 $$H_2(G,\Z)\oplus H_2(G,\Z)\oplus H_2(G,\Z)\to W(G)\,.$$
 As $H_2(G,\Z)$ is a torsion group, this implies that $W(G)$ is a torsion group too.
 On the other hand, it is known
 that $W(G)$ maps onto $H_2(G,\Z)$, see for instance \cite{Rocco} Lemma 2.2. It follows that if $H_2(G,\Z)$ contains 
 an element $x$
 of order $n$ and $y\in W(G)$ is a counter-image of $x$, then $y$ is a torsion 
 element and some multiple of $y$ will have order $n$.
\end{proof}
 We now can construct concrete examples of torsion-free groups with
 torsion in their Sidki doubles.
 \begin{prop}\label{TorsionExample} Let $n>1$ be an integer.
 There exists a finitely presented torsion-free group $G(n)$ with
 classifying space $K(G(n),1)$ a finite $CW$-complex such that
 $X(G(n))$ contains an element of order $n$. Moreover, the group $G(n)$ can be chosen so that $\C G(n)$ does not contain any non-trivial idempotent.
 \end{prop}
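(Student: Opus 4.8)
The plan is to build $G(n)$ by applying Proposition~\ref{torsion}, which reduces the task to producing a perfect group $Q$ whose second homology $H_2(Q,\Z)$ is a torsion group containing an element of order $n$, and which additionally has a classifying space with good finiteness properties and satisfies the idempotent conjecture over $\C$. The natural candidate is a perfect group with a $K(Q,1)$ that is a \emph{finite} $CW$-complex and with $H_2(Q,\Z)$ equal to (or containing) $\Z/n\Z$; such groups are classical — for instance one can take an explicit finite presentation of a superperfect-type building block and modify it, or invoke a construction à la Maunder (already cited as \cite{M}) producing a perfect group with prescribed finite $H_2$ and a finite 2-dimensional classifying complex. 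First I would fix such a $Q$; then Proposition~\ref{torsion} immediately gives that $X(Q)$ contains an element of order $n$.

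The subtlety is that $Q$ itself need not be torsion-free — indeed a perfect group with a finite $K(Q,1)$ may well have torsion — whereas we want the \emph{final} group $G(n)$ to be torsion-free. So the second step is to embed $Q$ into a torsion-free group $G(n)$ in a way that (a) preserves the relevant part of $H_2$, so that $X(G(n))$ still has $n$-torsion, (b) keeps $G(n)$ finitely presented with a finite $K(G(n),1)$, and (c) forces $\C G(n)$ to have no nontrivial idempotents. For (c) the cleanest route is to arrange $G(n)$ to be torsion-free and a-T-menable (equivalently, to have the Haagerup property), or more generally to satisfy the Baum--Connes conjecture, since then the idempotent conjecture for $\C G(n)$ holds; alternatively one can arrange $G(n)$ to be a torsion-free group satisfying the Kadison--Kaplansky conjecture by some other known mechanism (e.g. hyperbolicity, or linearity plus the relevant known cases). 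The standard trick reconciling (a)--(c) is an HNN- or amalgam-type embedding: realize $Q$ as a subgroup of a torsion-free group using a Higman--Neumann--Neumann style construction, or build $G(n)$ as the fundamental group of a finite aspherical complex containing a $K(Q,1)$ as a subcomplex.

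The key homological point to check is that the embedding $Q \hookrightarrow G(n)$ induces a map on $H_2$ that does not kill the order-$n$ class, and — more to the point — that the order-$n$ torsion survives in $W(G(n))$ and hence in $X(G(n))$. Since $W(\,\cdot\,)$ is not functorial for arbitrary subgroup inclusions, I would instead work directly: use that $W(G(n))$ surjects onto $H_2(G(n),\Z)$ (as recalled in the proof of Proposition~\ref{torsion}, citing \cite{Rocco}*{Lemma 2.2}), so it suffices to guarantee $H_2(G(n),\Z)$ contains an element of order $n$, and then produce a torsion counter-image in $W(G(n))$ as in the proof of Proposition~\ref{torsion} — but the latter argument used perfectness of the ambient group via the stem-extension, so if $G(n)$ is not perfect I would instead retrieve the $n$-torsion in $W(G(n))$ through the perfect subgroup $Q$ together with naturality of $W$ along a \emph{retraction} or a split inclusion, if the embedding can be chosen split. \textbf{The main obstacle} is precisely this reconciliation: simultaneously getting torsion-freeness, a finite classifying complex, no idempotents in $\C G(n)$, and controlled behaviour of $H_2$ / $W$ under the chosen embedding, since the easiest embeddings that produce torsion-free groups (like HNN extensions over $\Z$) tend to disturb $H_2$, while embeddings preserving $H_2$ (like split inclusions into products) need not kill the torsion in $Q$. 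I expect the resolution to be a carefully chosen finite aspherical model in which $Q$ sits as a retract of $G(n)$, after first replacing $Q$ by a torsion-free perfect group with the same low-dimensional homology if such exists, or else by absorbing the torsion of $Q$ into an acyclic torsion-free overgroup (e.g. via a mapping-telescope / binate-group argument) while tracking the order-$n$ class in $H_2$ throughout.
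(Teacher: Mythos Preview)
Your proposal contains a genuine misconception that sends you down an unnecessary and unresolved detour. You write that ``$Q$ itself need not be torsion-free --- indeed a perfect group with a finite $K(Q,1)$ may well have torsion''. This is false: any group admitting a finite-dimensional $K(Q,1)$ is automatically torsion-free, because a nontrivial finite subgroup has infinite cohomological dimension. So once you have a perfect group $Q$ with a finite $K(Q,1)$ and $H_2(Q,\Z)\cong\Z/n\Z$, you are already done with the torsion-freeness requirement, and Proposition~\ref{torsion} applies directly. The entire ``second step'' --- embedding $Q$ into a larger torsion-free $G(n)$ via HNN extensions, retracts, binate overgroups, and tracking $H_2$ and $W$ through the embedding --- is superfluous, and indeed you never resolve the obstacle you identify there.

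The remaining real issue is the \emph{last} clause of the proposition: ensuring that $\C G(n)$ has no nontrivial idempotents. Here you gesture at a-T-menability, Baum--Connes, hyperbolicity, or linearity, but do not pin down a construction that simultaneously gives perfection, prescribed $H_2$, a finite $K(G,1)$, \emph{and} one of these properties. The paper resolves this in one stroke by invoking Leary's metric Kan--Thurston theorem \cite{Leary}, which for any finite complex (here the Moore space $M(\Z/n\Z,2)$) produces a cubical $\mathrm{CAT}(0)$ group $G(n)$ with a finite $K(G(n),1)$ having the same homology; such a $G(n)$ is then perfect with $H_2\cong\Z/n\Z$, torsion-free by the observation above, and satisfies the Bass Conjecture (hence the idempotent conjecture over $\C$) because cubical $\mathrm{CAT}(0)$ groups do. Your Maunder route \cite{M} would give everything \emph{except} this last clause --- the paper itself notes in the Remark following the proposition that for Maunder's groups the idempotent conjecture is not known.
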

 \begin{proof}
According to Leary \cite{Leary}*{Theorem A}, for any $n\in\N$ there is $G(n)$ a cubical CAT(0)-group with $K(G(n),1)$ a finite CW-complex with the same
homology as the mapping cone $C(\phi)$ of a degree $n$ map $\phi: S^2\to S^2$; 
 ($C(\phi)$ is a Moore space $M(\Z/n\Z,2)$).
  Then $G(n)$ is perfect and $H_2(G(n),\Z)\cong \Z/n\Z$.
It follows from Proposition \ref{torsion} that $X(G(n))$ contains an element of order $n$. 
Because cubical CAT(0)-groups satisfy the Bass Conjecture and $G(n)$ is torsion-free,
$\C G(n)$ does not have any non-trivial idempotent.
  \end{proof}
\begin{rema} We can also follow Maunder \cite{M} and take for $G(n)$ a group with $K(G(n),1)$ a finite
 CW-complex of dimension three, with the same homology as the mapping cone $C(\phi)$ of a degree $n$ map $\phi: S^2\to S^2$. This would also give a torsion-free group $G(n)$ with torsion in $X(G(n))$, but then we don't know if $\C G(n)$ is known to not have any non-trivial idempotent.
\end{rema}
\begin{bibdiv}\begin{biblist}

	\bib{Bass}{article}{
	   author={Bass, H.},
	   	   title={Euler characteristics and characters of discrete groups},
	   	   journal={Inventiones math.},
	   	   volume={35},
	   date={1976},
	   pages={155-196},
	   }

	        \bib{BHM}{article}{
		   author={Belk, J.},
		   author={Hyde, J.},
		   author={Matucci, F.},
		   title={Embeddings of $\Q$ into some finitely presented groups},
		   journal={arXiv:},
		   volume={2005.02036v2},
		   date={2020},
		   pages={1-19},
		   }
	   
		      \bib{BCM}{article}{
		   author={Berrick, A. J.},
		   author={Chatterji, I.},
		   author={Mislin, G.},
		   title={From acyclic groups to the Bass conjecture for amenable groups},
		   journal={Math. Ann.},
		   volume={329},
		   date={2004},
		   pages={597-621},
		   }
		   
		        \bib{BK}{article}{
		   author={Bridson, M. R.},
		   author={Kochloukova, D.},
		  		   title={Weak commutativity and finiteness properties of groups},
		   journal={Bulletin of the LMS},
		   volume={51},
		   date={2019},
		   pages={168 -- 180},
		   }
	   
	           \bib{Gardam}{article}{
		   author={Gardam, G.},
		    title={A counterexample to the unit conjecture for group rings},
		   journal={Annals of Math.},
		   volume={194},
		   date={2021},
		   pages={967 -- 979},
		   }

\bib{EHS}{article}{
	   author={Eckmann, B.},
	   author={Hilton, P.},
	   author={Stammbach, U.},
	   title={On the Homology Theory of Central Group Extensions : I -
	   The Commutator Map and Stem Extensions},
	     journal={Commentarii Mathematici Helvetici},
	   	   volume={472},
	   date={1970},
	   pages={102-122},
	   }

	 \bib{G}{book}{
	   author={Gruenberg, K.},
	   title={Cohomological Topics in Group Theory, Lecture Notes in Mathematics,
	   Springer Verlag },
	   date={1970},
	   pages={212-225},
	   }
	
	 \bib{K}{article}{
	   author={Kervaire, M.},
	   title={Multiplicateurs de Schur et K-th\'eorie},
	   	   journal={Springer Verlag, Essays in Topology and Related Topics},
	   date={1970},
	   pages={212-225},
	   }

	   \bib{KS}{article}{
	   author={Kochloukova, D.},
	   author={Sidki, S.},
	   title={On weak commutativity in groups},
	   	   journal={Journal of Algebra},
	   	   volume={471},
	   date={2017},
	   pages={319-347},
	   }
	   
	   \bib{Leary}{article}{
	   author={Leary, I.},
	   title={A metric Kan-Thurston theorem}, 
	   journal={Journal of Topology}, 
	   volume={6}, 
	   date={2013},
	   pages={251-284},
	   }
	   
	   \bib{LO}{article}{
	   author={Lima, B. C.},
	   author={Oliveira, R.},
	   title={Weak commutativity between two isomorphic polycyclic groups},
journal={Journal of Group Theory},
volume={19},
date={2016}
	   }
	   
	     \bib{Lewin}{article}{
	   author={Lewin, J.},
	   	   title={Projective modules over group-algebras of torsion-free groups},
	   	   journal={Michigan Math. J.},
	   	   volume={29},
	   date={1982},
	   pages={319-347},
	   }
	   
	       \bib{Linnell}{article}{
	   author={Linnell, P. A.},
	   	   title={Decomposition of augmentation ideals and relation modules},
	   journal={Proc. London Math. Soc.},
	   number={47},
	   date={1983},
           pages={83-127},
           }

	        \bib{M}{article}{
		   author={Maunder, C. A. F.},
		    		   title={A short proof of a theorem of Kan and Thurston},
		   journal={Bull. London Math. Soc.},
		   volume={13},
		   date={1981},
		   pages={325-327},
		   }

	           \bib{Murray}{article}{
		   author={Murray, A. G.},
		   title={More counterexamples to the unit conjecture for group rings},
		   journal={arXiv:},
		   volume={2106.02147v1},
		   date={2021},
		   pages={1-4},
		   }
 	
 \bib{Passman}{book}{
 author={Passman, D. S.},
 title={The Algebraic Structure of Group Rings, Wiley-Interscience New York},
  date={1977}
 }
 
 \bib{Rocco}{article}{
 author={Rocco, N. R.},
 title={On Weak Commutativity between Finite p-Groups, p: odd*},
 journal={Journal of algebra},
 volume={76},
 date={1982},
 pages={471-488},
 }
 
  \bib{R}{article}{
 author={Rocco, N. R.},
 title={On a construction related to the non-abelian tensor square of a group},
 journal={Bol. Soc. Bras. Mat.},
 volume={22},
  date={1991},
 pages={63-79},
  }
 
	\bib{sidki}{article}{
	  author={Sidki, S.},
	   title={On weak permutability between groups},
	   journal={Journal of Algebra},
	   volume={63},
	   date={1980},
	   pages={186-225},
	 }

\bib{Zal}{article}{
author={Zaleskii, A. E.}, 
title={A certain conjecture of Kaplansky}, 
journal={Dokl. Akad. Nauk. SSSR}, 
volume={203},
date={1972}, 
pages={749-751},
}

\end{biblist}\end{bibdiv}
\end{document}